\documentclass[11pt]{amsart}
\usepackage{color}
\usepackage{pstool}
\usepackage{tikz-cd}

\usepackage[latin1]{inputenc} 
\usepackage[english]{babel}
\usepackage{psfrag}
\usepackage[T1]{fontenc}
 \usepackage{lmodern}
\usepackage{mathrsfs}
\usepackage{graphicx,epsfig,amscd,graphics,xypic, subcaption}
\usepackage{amsmath,amssymb}

\usepackage{rotating}

\usepackage[margin=4cm]{geometry}

\newcommand{\C}{\mathbb{C}}
\newcommand{\Z}{\mathbb{Z}}
\newcommand{\R}{\mathbb{R}}

\newcommand{\SL}{ \mathrm{SL}(2,\mathbb{R})}

\newcommand{\M}{\mathcal{MD}_{g,n}}
\newcommand{\T}{\mathcal{TD}_{g,n}}
\newcommand{\MT}{\mathcal{MD}(\lambda)}
\newcommand{\TT}{\mathcal{TD}(\lambda)}

\newtheorem{theorem}{Theorem}

\newtheorem*{theorem*}{Theorem}   
\newtheorem*{lemma*}{Lemma}   
  
\newtheorem*{conjecture}{Conjecture}  
\newtheorem{proposition}[theorem]{Proposition}
\newtheorem{defi}{Definition}           
\newtheorem{question}{Question} 
\newtheorem{lemma}[theorem]{Lemma}

\newtheorem*{ex*}{Example \ref{exJ2} (continued)}

\title[Teichmüller dynamics and dilation tori]{Teichmüller dynamics, dilation tori and piecewise affine circle homeomorphisms}

\author[Selim Ghazouani]{Selim Ghazouani}
\address{Mathematics Institute, University of Warwick, Coventry CV4 7AL, U.K. }
\email{s.ghazouani@warwick.ac.uk}

\begin{document}
\maketitle

\begin{abstract}

We study the coarse geometry of the moduli space of dilation tori with two singularities and the dynamical properties of the action of the Teichmüller flow on this moduli space. This leads to a proof that the vertical foliation of a dilation torus is almost always Morse-Smale. As a corollary, we get that the generic piecewise affine circle homeomorphism with two break points -with respect to the Lebesgue measure- is Morse-Smale.

\end{abstract}

\section{Introduction}

This article is concerned with generic dynamical properties in families of piecewise affine circle homeomorphisms and closely related transversally affine foliations of tori. 

\vspace{3mm}

\noindent There are two competing notions of genericity in dynamical systems, the \textit{topological} one and the \textit{probabilistic} one. For a given finite-dimensional family of dynamical systems, one says a given property is

\begin{itemize}
\item \textit{topologically generic} if it is satisfied by a dense $G_{\delta}$ of the family;
\item \textit{probabilistically generic} or \textit{generic in measure} if it is satisfied by a set of parameters whose complement has Lebesgue measure equal to zero. 
\end{itemize}

\noindent The second notion is stronger than the first as it usually implies it. It is natural when given a family of dynamical systems to ask what dynamical behaviour is to be observed generically, in both existing sense of the term. The general setting in which we want to ask this question is the one of piecewise continuous bijections of the interval (including circle homeomorphisms) and foliations on surfaces (these two are to be thought of as the two sides of the same coin).  

\vspace{3mm} \noindent Foliations on surfaces satisfy the following trichotomy: it is either\footnote{We have voluntarily ignored the case of foliations having saddle connections as they are easily shown to form a negligible subset of the set of foliations.}

\begin{itemize}

\item \textit{Morse-Smale}, meaning roughly that every regular leaf accumulates to a closed attracting leave;

\item minimal and is a measured foliation;

\item or it has a closed invariant set which is locally the product of a Cantor set with an interval.

\end{itemize} We refer to the series of articles \cite{Levitt,Levitt1,Levitt2} where Levitt gives a nice classification of foliations on surfaces.

\noindent In his seminal work \cite{Peixoto,Peixoto1}, Peixoto proved that Morse-Smale foliations form a dense open subset of the (infinite dimensional) set of smooth foliations on a closed oriented surface of genus $g$ for any $g \in \mathbb{N}$, settling the question of the topological genericity for flows on surfaces. We would also like to mention \cite{Liousse} where similar results for transversally affine foliations are proven.

\vspace{2mm} \noindent  The set of smooth foliations on a given smooth surface (alternatively of smooth generalised interval exchange maps of the $S^1$) is an infinite dimensional space and it is therefore not clear what would be a good measure to put on it in order to create a framework to study probabilistic genericity. A way to bypass this difficulty is to ask the question for finite dimensional families of such maps and use the Lebesgue measure in the parameters. It is in this spirit that Arnold introduced the following family of circle diffeomorphisms (which is usually referred to as 'Arnold's tongues')

$$r_{\alpha,\epsilon} :  x \mapsto x + \alpha + \epsilon\sin(2\pi x) $$ and proved that the set of parameters $(\alpha,\epsilon)$ for which $r_{\alpha,\epsilon}$ is minimal is of positive measure - in spite of his complement containing a dense open set (made of Morse-Smale circle diffeomorphisms). This result was later generalised by Herman who went on to prove the following

\begin{theorem}[Herman, \cite{Herman}]
Let $(f_t)_{t \in [0,1]}$ be a smooth family of $\mathcal{C}^3$ circle diffeomorphisms. Assume that the rotation number of $f_0$ is different from that of $f_1$. Then the set 

$$  \big\{ t \in [0,1 \ | \ f_t \ \text{is minimal} \big\}   $$ has positive measure. 
\end{theorem}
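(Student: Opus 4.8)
The plan is to reduce the statement about minimality to a statement about rotation numbers, and then to a measure estimate on the parameter interval. First I would invoke Denjoy's theorem: since each $f_t$ is $\mathcal{C}^3$, in particular $\mathcal{C}^2$, it has no wandering intervals, so $f_t$ is minimal precisely when its rotation number $\rho(f_t)$ is irrational (if $\rho(f_t) = p/q$ the map has a periodic orbit and is not minimal). Thus the set appearing in the statement coincides with $A := \{t \in [0,1] : \rho(f_t) \in \mathbb{R}\setminus\mathbb{Q}\}$, and the whole problem becomes showing that $\mathrm{Leb}(A) > 0$.

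Next I would record the elementary structural facts about $\rho(t) := \rho(f_t)$. The rotation number depends continuously on the map, so $\rho$ is continuous on $[0,1]$; since $\rho(0) \neq \rho(1)$, the intermediate value theorem shows that the image of $\rho$ contains the nondegenerate interval $J$ with endpoints $\rho(f_0)$ and $\rho(f_1)$. The complement of $A$ is the mode-locked set $L = \rho^{-1}(\mathbb{Q})$, a countable union of closed sets (the ``Arnold tongues'' in the $t$-direction). So proving $\mathrm{Leb}(A) > 0$ is equivalent to proving $\mathrm{Leb}(L) < 1$, i.e. that the phase-locking sets do not fill up all of $[0,1]$.

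The heart of the matter, and the main obstacle, is precisely this last assertion: countability of $\mathbb{Q}$ is useless here, because each individual tongue $\rho^{-1}(p/q)$ can be a fat set, and Arnold's own example shows that for families with large nonlinearity the tongues can have full measure. To control them I would exploit the $\mathcal{C}^3$ hypothesis through Herman--Yoccoz linearization theory: if $\rho(f_t) = \omega$ is Diophantine then $f_t$ is $\mathcal{C}^1$-conjugate to the rigid rotation $R_\omega$, hence in particular minimal, so it suffices to show that the set of parameters with Diophantine rotation number has positive measure. Concretely, I would try to establish a quantitative lower bound of the form $\mathrm{Leb}(\{t : \rho(t) \in [\omega, \omega+\delta]\}) \geq c\,\delta$ valid for Diophantine $\omega$ in the interior of $J$ and small $\delta$; this encodes a measure-theoretic transversality of the family to the level sets of $\rho$ at good rotation numbers. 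Integrating such an estimate against the full-measure set of Diophantine numbers in $J$, while discarding the measure-zero set of rationals and Liouville values, would yield $\mathrm{Leb}(A) > 0$.

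The key difficulty is this transversality/distortion estimate, and it is where the $\mathcal{C}^3$ regularity is indispensable: for merely $\mathcal{C}^1$ or topological families it genuinely fails. I expect the estimate to come from analysing, near a Diophantine parameter $t_*$, how $\rho(t)$ moves as $t$ varies, using the linearizing conjugacy to compare $f_t$ with a small perturbation of a rotation and bounding the nonlinear distortion in terms of the smoothness norms; the boundaries of the neighbouring tongues accumulate on $t_*$ in a controlled way, and one must show that their total contribution stays strictly below the length of $J$. Assembling the local estimates into a global positive-measure statement, for instance via a Vitali-type covering of the Diophantine values together with a Fubini argument applied to the auxiliary two-parameter family $(t,s) \mapsto \rho(R_s \circ f_t)$, would complete the proof.
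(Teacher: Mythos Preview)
The paper does not prove this theorem. It is quoted in the introduction as a known result of Herman, with a citation to \cite{Herman}, purely as historical background and motivation for the paper's own study of piecewise affine circle homeomorphisms and dilation tori. No argument, sketch, or indication of proof is given anywhere in the paper; the theorem plays no logical role in the results that follow. There is therefore nothing in the paper to compare your proposal against.

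As to the proposal itself: your reduction via Denjoy's theorem to the statement $\mathrm{Leb}\big(\rho^{-1}(\mathbb{R}\setminus\mathbb{Q})\big)>0$ is correct, and you rightly identify that the entire content lies in bounding the total length of the mode-locking plateaus. But what you have written is an outline of where the difficulty sits rather than a proof. The ``quantitative lower bound'' $\mathrm{Leb}(\{t:\rho(t)\in[\omega,\omega+\delta]\})\geq c\,\delta$ at Diophantine $\omega$ is precisely the theorem in disguise, and you do not actually derive it: invoking the linearising conjugacy tells you that $f_{t_*}$ is conjugate to a rotation, but it does not by itself control how $\rho$ varies for nearby $t$, because the conjugacy depends on $t$ in an uncontrolled way and can blow up as $\omega$ approaches Liouville numbers or rationals. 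Herman's actual argument requires hard a priori estimates on the conjugacies (uniform in the Diophantine class) to make this step go through, and that analytic work is the substance of the theorem; your sketch names it but does not supply it. The Fubini idea with the auxiliary family $(t,s)\mapsto\rho(R_s\circ f_t)$ is in the right spirit, but as stated it does not close the loop either: positive measure in $s$ for each fixed $t$ does not yield positive measure in $t$ at $s=0$ without exactly the kind of transversality you are trying to establish.
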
 \noindent It is a remarkable feature of families of (sufficiently regular) circle diffeomorphisms that the notion of topological and probabilistic genericity do not agree. An equivalent statement for foliations would be the following: in one-parameter families of smooth foliations on the torus, there is a set of parameters of positive measure for which the foliation is minimal.

\vspace{3mm} \noindent We put forward in this article an investigation of the case of piecewise affine homeomorphisms of the circle and equivalently of transversally affine foliations on the torus. In his celebrated article \cite{Herman1} Herman opens a discussion on the family of piecewise affine circle homeomorphisms with two discontinuities for the derivative. This family can easily be parametrised by $4$ real parameters (the discontinuity points of the derivative and their images characterising completely such a piecewise affine map of the circle) and is therefore endowed with the standard Lebesgue measure. Our main theorem is the following

\begin{theorem}
\label{circleaffine}
Almost every piecewise affine circle homeomorphism with two break points is dynamically Morse-Smale.
\end{theorem}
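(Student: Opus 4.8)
The plan is to reduce Theorem~\ref{circleaffine} to a statement about dilation tori and then run a Teichm\"uller-dynamical argument on their moduli space. First I would invoke the suspension construction: a piecewise affine circle homeomorphism with two break points is exactly the first-return map of the vertical foliation of a dilation torus with two singularities to a horizontal transversal. This sets up a correspondence between the $4$-dimensional Lebesgue parameter space of such maps and the moduli space $\M$ with $(g,n)=(1,2)$, and the key point is that this correspondence is a local diffeomorphism, hence absolutely continuous in both directions, so that Lebesgue-null sets match null sets. A Morse-Smale circle homeomorphism corresponds precisely to a Morse-Smale vertical foliation, since an attracting closed leaf of the foliation is the suspension of an attracting periodic orbit of the return map. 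Therefore it suffices to prove that for almost every dilation torus the vertical foliation is Morse-Smale.

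Next I would fix the holonomy data and work in $\MT$, on which the Teichm\"uller flow $g_t$ and the full $\SL$-action are defined; rotating the direction by $\mathrm{SO}(2)$ amounts to changing the foliation, so the question becomes whether the vertical foliation is Morse-Smale for almost every point. The backbone is a dichotomy in the spirit of Masur's criterion but adapted to the dissipative nature of dilation surfaces: if the forward orbit $\{g_t\cdot X\}_{t\geq 0}$ escapes every compact subset of $\MT$, then along the degeneration a closed leaf of the vertical foliation must shrink to zero length and, because holonomy around it is a nontrivial dilation, becomes attracting, forcing Morse-Smale behaviour; conversely, minimal or Cantor-type dynamics can only occur when the forward orbit is recurrent to a compact set. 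I would prove the implication \emph{escape $\Rightarrow$ pinching $\Rightarrow$ Morse-Smale} by classifying the possible degenerations, i.e.\ describing the ends of $\MT$, and checking that in each end a specific simple closed curve is pinched and carries an attracting leaf of the vertical foliation.

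The heart of the proof, and the step I expect to be the main obstacle, is to show that almost every forward Teichm\"uller orbit in $\MT$ is escaping. This is where the coarse geometry of the moduli space enters: unlike the translation-surface case, the relevant flow is transient rather than recurrent, reflecting the absence of a finite $g_t$-invariant measure in the natural measure class. I would establish this transience by one of two routes: either by exhibiting a proper function on $\MT$ that increases along $g_t$ for almost every initial point, or by passing to an explicit Rauzy--Veech-type renormalization of the associated affine interval exchanges and running a Borel--Cantelli argument showing that the renormalization cocycle almost surely develops a strictly contracting branch, which manufactures an attracting periodic orbit. Both routes demand a quantitative grip on how the flow transports measure into the ends of $\MT$, and the most delicate point is ruling out a positive-measure set of recurrent forward orbits. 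Granting the transience statement, the dichotomy of the second step immediately forces the non-Morse-Smale locus to have measure zero in $\MT$, and transferring this conclusion back through the absolutely continuous correspondence of the first step yields Theorem~\ref{circleaffine}.
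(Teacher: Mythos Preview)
Your reduction to dilation tori via suspension is correct and matches the paper's setup. But the core dynamical argument you propose diverges from the paper and, as written, has a genuine gap.

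You frame the problem as proving \emph{transience}: almost every forward Teichm\"uller orbit escapes every compact set, hence the recurrent locus (which you argue contains the minimal locus) has measure zero. You then offer two vague routes---a Lyapunov-type function or a Rauzy--Veech Borel--Cantelli argument---and concede that ``the most delicate point is ruling out a positive-measure set of recurrent forward orbits''. That delicate point \emph{is} the theorem: an infinite-measure-preserving flow can perfectly well have a conservative part of positive measure, and nothing you have written distinguishes this situation from, say, the translation-surface case where recurrence is generic. Neither route is developed enough to constitute an argument. Your dichotomy ``escape $\Rightarrow$ Morse--Smale'' is also not quite right: orbits escaping into the \emph{cusp} correspond to totally periodic foliations, not Morse--Smale ones (the pinched curve there has trivial linear holonomy), so escape alone does not give what you want.

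The paper avoids proving transience altogether. Its mechanism is a \emph{density-point criterion} coupled with a \emph{finite-volume cusp}. Concretely: it introduces the function $\Theta(T)$ (angle of the largest cylinder), proves via explicit polygonal models that the region $\{\Theta \le \pi/4\}$ has finite $\mu$-volume, and then shows two things. First, if the vertical foliation of $T$ is minimal and $\Theta(g_t\cdot T)\not\to 0$, then the vertical direction is \emph{not} a Lebesgue density point of the set $E_T$ of minimal directions (because recurring large cylinders produce, after pulling back by $g_{-t}$, gaps of definite density arbitrarily close to the vertical). Second, the set where $\Theta(g_t\cdot T)\to 0$ has $\mu$-measure zero, by a pigeonhole argument using invariance of $\mu$ and finiteness of the cusp volume. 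Combining these with Fubini and the Lebesgue density theorem forces the minimal locus to be null. The key idea you are missing is this indirect attack via density points and the geometric function $\Theta$; it sidesteps entirely the question of where generic orbits go.
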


\noindent This theorem is a corollary of a more general study of geometric objects which we call \textit{dilation surfaces}. These are singular structures on surfaces modelled on $\C$ through the group of dilations (maps of the form $z \mapsto az + b$ with $a$ real and positive). They are typically the surfaces one gets when gluing parallel sides of a Euclidean polygon along dilations, see Figure \ref{example} below.

\begin{figure}[!h]
  \centering
  \includegraphics[scale=0.4]{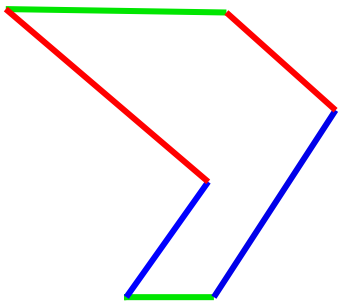}
  \caption{A hexagonal model whose sides of same colour are glued together to get a dilation torus.}
  \label{example}
\end{figure}

\noindent These surfaces come with directional foliations whose first return maps are piecewise affine. For a rigorous introduction to this material we refer to Section \ref{dilation}.  This change of viewpoint, analogous in many ways to the linear interval exchange transformations/translation surfaces duality, is very fruitful in the sense that the geometric properties of their moduli spaces encode the way the geometry of these surfaces can degenerate; these possible degenerations relating in a subtle way to the dynamical properties of their directional foliations by means of a renormalisation procedure acting upon the moduli space of dilation surfaces called the \textit{Teichmüller flow}.
\noindent We believe this article draws a clear analogy between the action of this flow and the action of the geodesic flow on hyperbolic manifolds of infinite volume. As we will see 

\begin{itemize}

\item Morse-Smale foliations corresponds to orbits of the Teichmüller flow escaping in a 'funnel' (non-compact part of the moduli space of infinite volume);

\item minimal foliations corresponds to recurrent orbits of the Teichmüller flow;

\item totally periodic foliations corresponds to orbits of the Teichmüller flow escaping in a 'cusp' (non-compact part of the moduli space of finite volume).

\end{itemize}

\noindent In this setting Theorem \ref{circleaffine} becomes the direct analogue of Alfohrs theorem stating that the limit set of a (sufficiently well-behaved) Kleinian group of infinite covolume has measure zero.

\vspace{3mm}

\paragraph*{\bf Outline of the article}

In Section \ref{dilation} and \ref{moduli} we gather elementary material on dilation surfaces and their moduli spaces $\MT$ (the $\lambda$ in the notation simply corresponds to fixing the singularity type). Notably we introduce the notion of \textit{dilation cylinder} which is the affine structure on a cylinder one gets by making the quotient of an angular sector of $\C$ by a dilation centred at the vertex of this angular sector. For any dilation surface $\Sigma$, we define $\Theta(\Sigma)$ to be the angle of the largest cylinder contained in $\Sigma$; $\Theta$ defines a continuous function on the moduli space of dilation surfaces. This function is going to be crucial in the description of the coarse geometry of $\MT$. 

\noindent Another key ingredient that we introduce is an action of $\SL$ on $\MT$. The \textit{Teichmüller flow} is the restriction of this action to the subgroup of diagonal matrices. We show the existence of an invariant measure $\mu$ for this $\SL$-action which is in the same class as the Lebesgue measure. The existence of such a measure is crucial to the proof of our main result.
 
\noindent In Section \ref{twosingularities} and \ref{geometry} we study in detail the geometric properties of dilation tori with two singularities. We prove a battery of technical results that allows us to get a good understanding of the way the geometry of such a dilation torus can degenerate.   

\noindent In Section \ref{volume} we give a proof that subsets of $\MT$ on which the function $\Theta$ does not exceed $\frac{\pi}{4}$ have finite $\mu$-volume. This is in a sense the technical heart of the proof as everything follows quite smoothly from this point. This calculation relies on the work performed in Section \ref{geometry}.

\noindent Finally in Section \ref{dynamics} we give a proof of the fact that minimal foliations  (thought of as points of $\MT$) form a subset of measure zero of $\MT$, see Theorem \ref{maintheorem}. The proof relies on a simple criterion for a minimal foliation in $\MT$ to be a density point of the set of minimal foliations. This criterion is given in terms of the evolution of the function $\Theta$ along the Teichmüller flow. 
\section{Dilation surfaces}
\label{dilation}

\noindent We begin with a formal definition of the notion of dilation surface. 

\begin{defi}
A dilation surface is a geometric structure on a surface modelled on the complex plane $\mathbb{C}$ through the subgroup of complex affine transformations $\R^*_+ \ltimes \C$ whose elements have real positive linear part and with a finite number of singular points of dilation-conical type.
\end{defi}

\noindent A singular point of dilation-conical type is a slight variation on usual Euclidean cone singularities of angle $2k \pi$: there is (possibly) an additional dilation factor when one computes the parallel transport around the singular point. If $\Sigma$ is a dilation surface, we denote by $S(\Sigma)$ or simply $S$ the set of its singular points.

\vspace{2mm} \noindent  A typical example of dilation surface is the structure one gets when identifying pairs of parallel sides of a polygon in the plane. Translation surfaces are particular examples of dilation surfaces. 
\vspace{2mm}

\paragraph{\bf Directional foliations} The foliations by straight lines of fixed slope of the complex plane $\C$ are preserved by the subgroup $\R^*_+ \ltimes \C$. This allows to define on every dilation surface a family of \textit{directional foliations} which are simply the trace of the aforementioned foliations. Each of these individual foliations comes with an extra \textit{transversally affine structure} (see \cite{Liousse} for a detailed discussion of this notion).

\vspace{2mm} \noindent We say that a foliation is \textit{Morse-Smale} when there exists a finite number of closed repelling or attracting leaves for which the following holds: the $\alpha$-limit and $\omega$-limit of every regular leaf is one of these closed leaves.

\subsection{Dilation cylinders}

Let $\mathcal{S}_{\theta}$ be an angular sector in $\C$ of angle $\theta \in ]0,2\pi[$ based at $0 \in \C$. We denote by $\mathcal{C}_{\theta, \lambda}$ the quotient of  $\mathcal{S}_{\theta}$ by the action of $z \mapsto \lambda z$ where $\lambda > 1$. We call this surface the \textit{dilation cylinder of angle} $\theta$ \textit{and of multiplier} $\lambda$. It is a dilation surface homeomorphic to a cylinder $S^1 \times \R$ whose boundary is totally geodesic. 

\vspace{3mm} \noindent A (dilation) cylinder of angle $\theta$ and multiplier $\lambda > 1$ in a dilation surface $\Sigma$ is a maximal affine embedding of $\mathcal{C}_{\theta, \lambda}$ in $\Sigma$. We say that a dilation surface has a cylinder in direction $\alpha \in S^1$ if the directional foliation in direction $\alpha$ has a closed leaf contain in a cylinder. 

\begin{proposition}
\label{numberofcylinders}
Let $\Sigma$ be a dilation surface of genus $g$ with $n$ singular points and let $\alpha$ be a direction in $S^1$. Then $\Sigma$ has at most $3g-3 +n$ cylinders in direction $\alpha$.
\end{proposition}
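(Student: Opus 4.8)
The plan is to pass from cylinders to their core curves and to show that the closed leaves in direction $\alpha$ carried by the cylinders form a system of disjoint, pairwise non-isotopic, essential and non-peripheral simple closed curves on $\Sigma$, with the $n$ points of $S$ regarded as punctures. The desired bound is then exactly the classical estimate on the size of such a system, namely the number $3g-3+n$ of curves in a pants decomposition of a genus-$g$ surface with $n$ punctures. Cylinders are maximal affine embeddings by definition, and each one containing a closed leaf in direction $\alpha$ contains a unique such leaf (the image of the ray at angle $\alpha$); write $C_1,\dots,C_k$ for the cylinders in direction $\alpha$ and $\ell_i\subset C_i$ for these leaves.

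Disjointness is immediate: the $\ell_i$ are all leaves of the single foliation in direction $\alpha$, hence pairwise equal or disjoint, and they are pairwise distinct because a closed geodesic determines the unique maximal flat cylinder containing it, so $\ell_i=\ell_j$ would force $C_i=C_j$. For essentialness and non-peripherality I would show that no $\ell_i$ bounds a disk in $\Sigma$ at all. Suppose $\ell_i$ bounded a disk $D$; doubling $D$ along the smooth closed leaf $\ell_i$ produces a foliation of the sphere whose singular set is two copies of each point of $S\cap D$. Since the linear holonomy consists of positive homotheties the directional foliation is orientable, so the Poincar\'e--Hopf theorem gives $2\sum_{q\in D}\mathrm{ind}(q)=\chi(S^2)=2$; but a dilation-conical singularity of cone angle $2k\pi$ has index $1-k\le 0$, so the left-hand side is $\le 0$, a contradiction. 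Hence $\ell_i$ bounds no disk, and in particular bounds no disk containing zero or exactly one singular point, which is precisely essentialness and non-peripherality.

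The remaining and most delicate point is that the $\ell_i$ are pairwise non-isotopic. If $\ell_i$ and $\ell_j$ were isotopic in $\Sigma\setminus S$ they would cobound an annulus $A$ containing no singular point, both of whose boundary circles are closed leaves of the direction-$\alpha$ foliation. The transverse affine structure furnishes a holonomy representation $\pi_1(A)=\Z\to\mathrm{Aff}(\R)$, and a closed leaf corresponds to a fixed point of the holonomy of the generator. Since an affine self-map of the line with two distinct fixed points is the identity, the holonomy is trivial, $A$ is foliated by closed leaves in direction $\alpha$, and $A$ is contained in a single maximal cylinder; this forces $C_i=C_j$, a contradiction.

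The step I expect to require the most care is this last one, because the genuinely dilation (non-translation) setting permits leaves to spiral towards a closed leaf, a Reeb-like phenomenon absent for translation surfaces; excluding it between the two putative isotopic cores relies essentially on the fact that an affine self-map of the line has at most one fixed point unless it is the identity. The index computation for dilation-conical singularities underlying essentialness and non-peripherality is the other input to be pinned down, but it is comparatively routine once the local model of such a singularity is in hand.
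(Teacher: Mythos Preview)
Your approach is essentially that of the paper: reduce to the elementary topological bound $3g-3+n$ on systems of disjoint, pairwise non-isotopic, essential simple closed curves on the punctured surface. The paper's two-sentence proof simply asserts that cylinders in a given direction are pairwise disjoint with pairwise non-free-homotopic boundary curves and then invokes this bound, whereas you work with core curves and supply the Poincar\'e--Hopf and transverse-holonomy justifications that the paper takes for granted (one minor slip: your phrase ``maximal flat cylinder'' should read ``maximal dilation cylinder,'' and the uniqueness of the closed leaf in direction~$\alpha$ can fail for cylinders of angle~$>\pi$, though this does not affect your argument).
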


\begin{proof}
Cylinders in a given direction are pairwise disjoint and any pair of boundary curves of two different cylinders are not free-homotopic in the punctured surface. It is an elementary fact of topology of surfaces that their cannot be more than $3g-3 + n$ such disjoint non-free-homotopic cylinders. 
\end{proof}

\subsection{Triangulations and large cylinders}

In order to study geometric properties of a given dilation surface, it is often convenient to have a polygonal representation of this surface. It is natural to ask in that context what are the surfaces that can be represented as a polygon. A related question is the one of the existence of  triangulations  whose set of vertices is exactly the set of singular points and whose edges a regular geodesic segments. We call such a triangulation a \textit{geodesic triangulation}.

\vspace{2mm}

\noindent There is a natural obstruction to the existence of such a triangulation: if a dilation surface contains a cylinder of angle greater or equal to $\pi$, it cannot have a geodesic triangulation. A theorem of Veech shows that it is the only obstruction.

\begin{theorem}[Veech, \cite{VeechU}]
Let $\Sigma$ be a dilation surface of genus $g$ with $n$ singular points. Assume $2-2g-n < 0$. Then $\Sigma$ admits a geodesic triangulation if and only if $\Sigma$ does not contain any cylinder of angle larger than $\pi$.
\end{theorem}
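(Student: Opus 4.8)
The plan is to prove the two implications separately; the forward direction (a wide cylinder obstructs triangulation) is soft, while the converse carries the real content.

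\emph{The obstruction.} First I would show that if $\Sigma$ contains a cylinder $\mathcal{C}_{\theta,\lambda}$ of angle $\theta\geq\pi$, then no geodesic triangulation exists. Fix a closed core leaf $\gamma$ of the cylinder: it is a closed geodesic of some direction $\phi_0\in(0,\theta)$, disjoint from $S(\Sigma)$, and in developing coordinates the whole cylinder develops onto the sector $\mathcal{S}_{\theta}$ with $\gamma$ developing onto the ray at angle $\phi_0$. In a hypothetical geodesic triangulation, $\gamma$ avoids all vertices and is covered by triangles, hence is cut into finitely many arcs by the edges it crosses. Each such edge develops to a straight chord of $\mathcal{S}_{\theta}$ meeting the ray at angle $\phi_0$, and therefore joins the two sides of the sector. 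But for $\theta\geq\pi$ the sector $\mathcal{S}_{\theta}$ is not convex, so no straight segment can join its two sides while staying inside it. This contradiction rules out any geodesic triangulation, and it is precisely the non-convexity of $\mathcal{S}_{\theta}$ that fixes the threshold $\pi$.

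\emph{Construction of a triangulation.} For the converse I cannot appeal to a Delaunay decomposition or to a shortest-geodesic argument: because the holonomy of a dilation surface scales lengths, there is no invariant flat metric and no global notion of length to minimise, so the construction must be combinatorial. I would first note that $\Sigma$ carries at least one saddle connection (following a separatrix issued from a point of $S(\Sigma)$, it either reaches another singular point, or accumulates, in which case varying the direction and invoking the structure of the directional foliations produces one), and then choose a family $\mathcal{E}$ of pairwise non-crossing saddle connections that is maximal. Such a family is finite, by the same Euler-characteristic bound on disjoint, pairwise non-homotopic arcs used in Proposition \ref{numberofcylinders}. Cutting $\Sigma$ along $\mathcal{E}$ decomposes it into finitely many faces, each carrying a flat structure with no interior singular point and with piecewise-geodesic boundary whose corners lie in $S(\Sigma)$.

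\emph{Faces are triangles.} The heart of the argument is the lemma that, under the hypothesis that no cylinder has angle $\geq\pi$, every face $F$ is a triangle; maximality of $\mathcal{E}$ then delivers the conclusion. Since $F$ is simply considered piece-by-piece, I would argue that a face with no further admissible diagonal is either a triangle or contains a cylinder whose core cannot be crossed by a straight geodesic, i.e.\ a cylinder of angle $\geq\pi$. On a simply-connected (disk) face the holonomy is trivial, so a genuine flat metric is available there: geodesics issued from a vertex $v$ exit $F$ in finite time, and sweeping their direction across the interior angle at $v$ forces the exit point to pass over another vertex unless $F$ is a triangle, yielding a diagonal disjoint from $\mathcal{E}$ and contradicting maximality. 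An annular face is itself a dilation cylinder of angle $\Theta$-bounded by hypothesis to be $<\pi$; by the convexity of $\mathcal{S}_{\Theta}$ a saddle connection joining singular points on its two boundary components exists and crosses it, again contradicting maximality. Ruling out the remaining (higher-complexity) faces proceeds similarly: any essential closed geodesic they carry lies in a cylinder, and the crossing analysis applies.

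\emph{Main obstacle.} The delicate point, and the one genuinely new relative to the translation-surface case, is the control of spiralling in this face analysis. For a translation surface ($\lambda=1$) geodesics in a cylinder never spiral, so sweeping always produces a diagonal and the classical construction goes through; here the multiplier $\lambda>1$ makes separatrices wind onto cylinder cores, and the absence of a global length function removes the usual compactness inputs. The crux is therefore to show that the \emph{only} way a face can resist being cut is the presence of a cylinder whose core no chord of $\mathcal{S}_{\Theta}$ can cross, and to verify that this is exactly the condition $\Theta\geq\pi$. Quantifying, purely in terms of the transversally affine structure and of directions, how a cylinder of small angle cannot obstruct visibility between the vertices of a face is where the main work lies.
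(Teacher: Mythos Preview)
The paper does not prove this theorem. It is stated as a result of Veech, with a citation to \cite{VeechU}, and used as a black box throughout; the only surrounding text is the remark that a cylinder of angle at least $\pi$ is a ``natural obstruction'' to geodesic triangulation, with no argument given even for that direction. There is therefore nothing in the paper to compare your attempt against.

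That said, a few comments on your sketch. Your obstruction argument is essentially right, but the phrasing ``for $\theta\geq\pi$ the sector $\mathcal{S}_\theta$ is not convex'' is slightly off at the endpoint: for $\theta=\pi$ the sector is a half-plane, which is convex; the correct observation is that any straight chord joining its two boundary rays must pass through the vertex $0$, which is excluded. For the converse, your outline (maximal non-crossing system of saddle connections, then analyse faces) is the natural one and is indeed how Veech proceeds, but you have correctly identified rather than resolved the main difficulty: showing that a non-triangular face forces a cylinder of angle $\geq\pi$ is exactly the content of the theorem, and your face-by-face discussion assumes the answer in the annular and higher-complexity cases (why is an annular face a single dilation cylinder, and why is its angle the relevant one?). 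Your ``Main obstacle'' paragraph is an honest statement that the proof is incomplete; filling it requires the Delaunay-type machinery Veech develops in the cited paper, which is not reproduced here.
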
 \noindent We will make use of this fact when defining moduli spaces of dilation surfaces.

\subsection{The invariant  $\Theta$ }
\label{theta}
A given dilation surface can contain an infinity of different cylinders. However it cannot contain cylinders of arbitrarily large angle. Actually we have the stronger statement:

\begin{proposition}
\label{finitecylinders}
Let $\Sigma$ be a dilation surface and let $\theta_0$ be a positive number. Then there are only finitely many cylinders in $\Sigma$ of angle larger than $\theta_0$.
\end{proposition}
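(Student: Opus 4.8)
The plan is to bound the number of large cylinders by an integral–geometric counting argument over directions, playing two facts against each other: a cylinder of large angle contains closed leaves in a large range of directions, while Proposition \ref{numberofcylinders} ensures that each individual direction is witnessed by boundedly many cylinders.

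The key geometric input I would establish first is the lemma that a cylinder of angle $\theta$ in $\Sigma$ contains a closed leaf of the directional foliation in direction $\alpha$ for every $\alpha$ ranging over an interval of directions of length $\theta$. To see this, recall that $\mathcal{C}_{\theta,\lambda}$ is the quotient of an angular sector $\mathcal{S}_{\theta}$ by $z\mapsto\lambda z$. If the sector spans the oriented directions $(\beta,\beta+\theta)$, then for each $\alpha\in(\beta,\beta+\theta)$ the radial ray $\{t e^{i\alpha}:t>0\}$ lies in the interior of $\mathcal{S}_{\theta}$ (away from the two boundary rays and from the vertex), is a leaf of the direction-$\alpha$ foliation, and descends to a genuine core circle in the quotient since $z\mapsto\lambda z$ maps this ray to itself. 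Because an affine embedding $\mathcal{C}_{\theta,\lambda}\hookrightarrow\Sigma$ has real positive linear part, it preserves oriented directions; hence the embedded cylinder contains a closed leaf in direction $\alpha$ for every $\alpha$ in an interval $I\subset S^1$ of length exactly $\theta$. This identification of the sector angle with the length of the interval of closed-leaf directions is precisely the feature the counting will exploit.

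With the lemma in hand, fix $0<\theta_0<2\pi$ (for $\theta_0\geq 2\pi$ there are no such cylinders, as sector angles lie in $(0,2\pi)$, and the statement is trivial) and let $\mathcal{F}$ be the set of cylinders in $\Sigma$ of angle $>\theta_0$. To each $C\in\mathcal{F}$ I associate the set $I_C\subset S^1$ of directions in which $C$ contains a closed leaf, so that $|I_C|\geq\theta(C)>\theta_0$ by the lemma. For a fixed direction $\alpha$, every $C\in\mathcal{F}$ with $\alpha\in I_C$ is a cylinder in direction $\alpha$, so Proposition \ref{numberofcylinders} bounds $\#\{C\in\mathcal{F}:\alpha\in I_C\}\leq 3g-3+n$. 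Integrating this pointwise bound over $S^1$ and using nonnegativity (Tonelli) to justify the interchange even a priori,
\[
|\mathcal{F}|\cdot\theta_0 \;\leq\; \sum_{C\in\mathcal{F}}|I_C| \;=\; \int_{S^1}\#\{C\in\mathcal{F}:\alpha\in I_C\}\,d\alpha \;\leq\; (3g-3+n)\cdot 2\pi ,
\]
whence $|\mathcal{F}|\leq 2\pi(3g-3+n)/\theta_0<\infty$.

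I expect the main obstacle to be the lemma rather than the counting, and within it the two points needing care: verifying that the radial rays descend to honest interior closed leaves (not boundary leaves, and not leaves running into the singular vertex), and confirming that oriented direction is a genuine global invariant on $\Sigma$ that the affine embedding preserves, so that $I_C$ really has length $\theta(C)$. Once this length-$\theta$ interval of directions is secured, the remainder is the elementary counting displayed above.
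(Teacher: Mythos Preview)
Your proof is correct and follows essentially the same approach as the paper: both arguments rest on the observation that a cylinder of angle $\theta$ contributes closed leaves in an interval of directions of length $\theta$, and then feed this into Proposition~\ref{numberofcylinders}. The only difference is cosmetic---the paper finishes by a pigeonhole/compactness contradiction, whereas you integrate over $S^1$ to get the explicit bound $|\mathcal{F}|\leq 2\pi(3g-3+n)/\theta_0$, which is a pleasant quantitative bonus.
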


\begin{proof}
This proposition is a consequence of Proposition \ref{numberofcylinders}. Indeed, assume there are infinitely many cylinders of angle larger than a given $\theta_0$. Then, by compactness of $S^1$ there is a direction $\alpha$ which is contained in infinitely many of these cylinders which contradicts Proposition \ref{numberofcylinders}.
\end{proof}

\noindent We denote by $\Theta(\Sigma)$ the angle of the largest cylinder of $\Sigma$. This number is well defined thanks to Proposition \ref{finitecylinders}. We will see later on that $\Theta$ actually defines a very useful function on the moduli space of dilation surfaces. 

\subsection{Linear holonomy}

We end this section by a short discussion about the notion holonomy. Dilation surfaces are a particular occurrence of a $(G,X)$-structure with singularities where $X$ is a model space and $G$ a group acting in a rigid way on $X$. In our case, $X = \C$ and $G= \R^*_+ \ltimes \C$. In this setting, it is possible(see \cite{Ghazouani2}) to produce an algebraic invariant called the \textit{holonomy} which a (class of) representation $ \pi_1 (\Sigma \setminus S) \longrightarrow  \R^*_+ \ltimes \C$. 

\vspace{2mm} \noindent To our purpose it is convenient to consider only the projection of this representation onto the factor $\R^*_+$ to get by this mean a representation $\rho: \pi_1(\Sigma \setminus S) \longrightarrow \R^*_+$ which factors through $\rho : H_1(\Sigma \setminus S, \Z) \longrightarrow \R^*_+$. We can therefore think of $\rho$ as an element of $H^1(\Sigma \setminus S, \R^*_+)$. We call $\rho$ the \textit{linear holonomy} of $\Sigma$. 

\vspace{2mm} \noindent This representation has the following geometric meaning. Consider a closed loop $\gamma$ on $\Sigma$. The parallel transport defined by the affine structure along $\gamma$ is a dilation by a certain factor which is exactly $\rho(\gamma)$. 

\section{Moduli spaces and action of $\mathrm{SL}(2,\mathbb{R})$}
\label{moduli}
We define in this section moduli spaces of dilation structures. For the rest of the section

\begin{itemize}
\item $g$ and $n$ be integers such that $2 - 2g - n < 0$; 

\item  $\lambda = (\lambda_1, \cdots, \lambda_n)$ are positive numbers such that $\prod{\lambda_i} = 1$;

\item $\Sigma_{g,n}$ is topological surface with $n$ marked points that we denote by $\{p_1, \cdots, p_n\}$.

\end{itemize} 

\noindent We define

$$ \mathcal{TD}^*_{g,n} = \big\{ \text{dilation structure on }\Sigma_{g,n} \text{with singularities at the marked points} \big\}/_{isotopies} $$

$$ \mathcal{MD}^*_{g,n} = \big\{ \text{dilation structure on }\Sigma_{g,n} \text{with singularities at the marked points} \big\}/_{diffeomorphisms} $$

\noindent Both $ \mathcal{TD}^*_{g,n}$ and $\mathcal{MD}^*_{g,n}$ can be partitioned according to both the dilation factor and the angle  (which is always an integer multiple of $2\pi$) around its singular points. In order to keep this section readable, we are not going to introduce any notation yet for the moduli spaces induced by this partition.

\vspace{2mm} \noindent Another important remark is that within both $\mathcal{TD}^*_{g,n}$ and $\mathcal{MD}^*_{g,n}$ lie a remarkable locus which is the set of dilation surfaces admitting geodesic triangulations.  For some reasons of a dynamical nature, we believe that this locus is more interesting to study. We therefore define

\begin{equation*}
\begin{aligned}
\mathcal{TD}_{g,n} =  &\big\{ \text{dilation structure on }\Sigma_{g,n} \text{with singularities at the marked points }   \\ 
 & \text{admitting a geodesic triangulation} \big\}/_{isotopies}
\end{aligned}
\end{equation*}  

\begin{equation*}
\begin{aligned}
\mathcal{MD}_{g,n} =  &\big\{ \text{dilation structure on }\Sigma_{g,n} \text{with singularities at the marked points }   \\ 
 & \text{admitting a geodesic triangulation} \big\}/_{homeomorphisms}\end{aligned}
\end{equation*}  

\noindent It is immediate from the definition that $\mathcal{MD}_{g,n}$(resp. $\mathcal{MD}^*_{g,n}$) is the quotient of $\mathcal{TD}_{g,n}$ (resp. $\mathcal{TD}^*_{g,n}$) by the action of the pure mapping class group $\mathrm{MCG}(g,n)$. All these moduli spaces are orbifolds, as ensured by a theorem of Veech.

\begin{theorem}[Veech, \cite{Veech}]
 $\mathcal{TD}_{g,n}$  and $\mathcal{TD}^*_{g,n}$ ($\mathcal{MD}_{g,n}$ and  $\mathcal{MD}^*_{g,n}$) are analytic manifolds (resp. orbifolds) of dimension $ 6(g-1) + 3n$.
\end{theorem}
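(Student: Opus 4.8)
The plan is to cover $\mathcal{TD}_{g,n}$ by charts built from geodesic triangulations and to read off both the dimension and the analytic structure from the combinatorics of such a triangulation. First I would fix a combinatorial (isotopy) type $\tau$ of a triangulation of $\Sigma_{g,n}$ whose vertex set is exactly $\{p_1,\dots,p_n\}$, and let $U_\tau \subset \mathcal{TD}_{g,n}$ be the set of structures carrying a geodesic triangulation isotopic to $\tau$. By Veech's triangulation theorem every point of $\mathcal{TD}_{g,n}$ admits at least one geodesic triangulation, so the $U_\tau$ cover $\mathcal{TD}_{g,n}$, and each $U_\tau$ is open since non-degeneracy of the developed triangles is an open condition. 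On $U_\tau$ I then introduce \emph{developing coordinates}: choosing a spanning tree of the dual graph, I develop a base triangle into $\C$ and propagate the developing map $\dev$ across the tree edges, recording the resulting developed vertices (equivalently the developed edge vectors).

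The dimension count comes out of the Euler characteristic of $\tau$. With $V=n$ vertices, $E$ edges and $F$ triangles one has $V-E+F = 2-2g$ and $3F = 2E$, hence $E = 6g-6+3n$ and $F = 4g-4+2n$. Developing along the spanning tree is free: the base triangle costs $6$ real parameters and each of the remaining $F-1$ triangles adds one new vertex, i.e. $2$ parameters, giving $4+2F$ in total. Each of the $E-F+1$ non-tree edges then imposes one real-analytic gluing equation, namely that its two developed copies lie in a common orbit of $G = \R^*_+\ltimes\C$; since a generic $G$-orbit in the $4$-dimensional space of segments is $3$-dimensional, this is exactly one equation per non-tree edge (the two copies must be parallel, after which the multiplier and translation are forced). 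Quotienting finally by the global $3$-dimensional $G$-action gives
\[
(4+2F)-(E-F+1)-3 \;=\; 3F-E \;=\; E \;=\; 6(g-1)+3n,
\]
where the last two equalities use $3F=2E$. So the model space $M_\tau$ cut out by the gluing equations modulo $G$ has the expected dimension.

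It remains to promote this count to a genuine real-analytic manifold structure. The gluing equations are real-analytic, and at an honest non-degenerate triangulated surface they are independent: the non-tree edges are dual to a basis of $H_1(\Sigma\setminus S)$ and the parallelism conditions express that the linear holonomy is a submersion along that basis, so $M_\tau$ is smooth of dimension $6(g-1)+3n$ there. I would then check, via the Ehresmann--Thurston completeness principle adapted to this setting, that the developing-coordinate map is a local homeomorphism of $U_\tau$ onto an open subset of $M_\tau$ (every nearby developing datum is realised by a nearby structure, and two structures with the same datum are isotopic). Finally, transition functions between overlapping charts $U_\tau\cap U_{\tau'}$ are obtained by composing elementary diagonal flips, whose effect on the developed coordinates is rational in the vertex positions and hence real-analytic; this also shows the dimension is independent of $\tau$. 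Therefore $\mathcal{TD}_{g,n}$ is a real-analytic manifold of dimension $6(g-1)+3n$.

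The other three spaces follow formally. For $\mathcal{TD}^*_{g,n}$, where triangulability may fail, one uses the general deformation theory of $(G,X)$-structures with cone-type singularities (holonomy together with the developing map) to produce charts everywhere with the same parameter count, with $\mathcal{TD}_{g,n}$ sitting inside as the open triangulable locus. For the unmarked spaces one invokes the identification, already noted in the text, of $\mathcal{MD}_{g,n}$ (resp. $\mathcal{MD}^*_{g,n}$) with $\mathcal{TD}_{g,n}/\mathrm{MCG}(g,n)$ (resp. the starred quotient); since $\mathrm{MCG}(g,n)$ acts by real-analytic diffeomorphisms and properly discontinuously, the quotients are real-analytic orbifolds of the same dimension. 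The main obstacle is the completeness half of the chart construction, that is, verifying that developing coordinates are a local homeomorphism rather than merely a continuous injection; this is exactly the point where one needs the Ehresmann--Thurston deformation argument, together with the transversality of the $E-F+1$ gluing equations at non-degenerate triangulated structures.
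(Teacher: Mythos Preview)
The paper does not prove this theorem; it is simply quoted as a result of Veech with a citation to \cite{Veech}, so there is no ``paper's own proof'' to compare against. Your proposal is therefore an attempt to reconstruct Veech's argument, not to reproduce something from this article.

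Your outline is in the right spirit and the dimension count is correct: with $V=n$, $3F=2E$ and $V-E+F=2-2g$ one gets $E=6(g-1)+3n$, and your bookkeeping $(4+2F)-(E-F+1)-3=3F-E=E$ is sound. The chart scheme (triangulation types, develop along a spanning tree, impose gluing along non-tree edges, mod out by $G$, flip for transitions) is also the standard picture, and indeed matches what the paper later calls ``period coordinates'' informally.

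Where the proposal is thin is exactly where you flag it. The claim that the $E-F+1$ parallelism equations are independent is not justified by the sentence ``the non-tree edges are dual to a basis of $H_1(\Sigma\setminus S)$ and the parallelism conditions express that the linear holonomy is a submersion along that basis'': the parallelism equations concern \emph{directions} of developed edges, while the linear holonomy concerns their \emph{length ratios}, so the submersivity of the latter does not by itself yield transversality of the former. In practice one avoids this by working directly with twisted period coordinates (a relative $1$-cocycle with values in $\C$ twisted by the linear holonomy), which is closer to how Veech actually proceeds and makes the manifold structure and the submersivity of the holonomy map (cited later in the paper as Theorem 7.4 of \cite{Veech}) come out simultaneously. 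Likewise, for $\mathcal{TD}^*_{g,n}$ you cannot simply invoke ``general deformation theory'': the non-triangulable locus consists of surfaces containing a cylinder of angle $\geq \pi$, and you need charts adapted to that geometry. Finally, the orbifold statement requires knowing that the pure mapping class group acts properly discontinuously on $\mathcal{TD}_{g,n}$, which is not automatic and is part of what Veech establishes.
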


\subsection{Isoholonomic foliation}

Since $\T$ is a set of marked dilation structures, the following \textit{linear holonomy map} 
$$
\begin{array}{ccccc}
\mathrm{H} & : & \T & \longrightarrow & \mathrm{H}^1(\Sigma_{g,n}, \R^*) \\
 & & \Sigma & \longmapsto & \rho(\Sigma) 
\end{array}
$$ is well defined. It is a submersion according to a theorem of Veech (\cite{Veech}, p.625 Theorem 7.4) hence its level sets define a trivial foliation of $\T$. Since the map $\mathrm{H}$ is equivariant for the mapping class group action on $\T$ with respect to its linear action on  $\mathrm{H}^1(\Sigma_{g,n}, \R^*)$, this foliation passes to the quotient $\T / \mathrm{Mod}(\Sigma_{g,n}) = \M$. This foliation we call the \textit{isoholonomic foliation}.

\subsection{Action of $\mathrm{SL}(2,\mathbb{R})$}
We define in this paragraph an action of $\mathrm{SL}(2,\mathbb{R})$ on $\T$ and $\M$. Consider a dilation atlas $(U, \varphi_U)_{U \in \mathcal{U}}$ on $\Sigma_{g,n}$ where $\mathcal{U}$ is a collection of open subsets of $\Sigma_{g,n}$ and for all $U \in \mathcal{U}$, $\varphi_U : U \rightarrow \C$ is a homeomorphism defining the dilation structure. Let $A$ be an element of $\mathrm{SL}(2,\R)$. One easily verifies that $(U, A \circ \varphi_U)_{U \in \mathcal{U}}$ also define a dilation atlas and hence a new dilation structure on $\Sigma_{g,n}$. By this mean, we define an action of $\mathrm{SL}(2, \R)$  on both $\mathcal{TD}^*_{g,n}$ and $\mathcal{MD}^*_{g,n}$. 

\begin{itemize}
\item The image of geodesic triangulation by the action of an element of $\mathrm{SL}(2,\R)$ is a geodesic triangulation. This action therefore preserves $\T \subset  \mathcal{TD}^*_{g,n}$ and $\M \subset \mathcal{MD}^*_{g,n}$ and defines by restriction an action on both $\T$ and $\MT$. We will only be concerned with this action in the sequel.  

\item This action on $\T$ and $\M$ is locally free (but is not on  $ \mathcal{TD}^*_{g,n}$ and $\mathcal{MD}^*_{g,n}$, see \cite{DFG}). 

\item It preserves the isoholonomic foliation. 

\end{itemize}

\noindent This action restricted to the subgroup of diagonal matrices $\big\{ \begin{pmatrix}
e^t & 0 \\ 0 &e^{-t} 
\end{pmatrix} \ | \ t \in \R \big\}$ defines a flow that we call \textit{Teichmüller flow}.

\subsection{$\Theta$ seen as a function on the moduli space}

The invariant $\Theta$ of a dilation surface defined in Section \ref{theta} actually defines a continuous function:

$$ \Theta : \mathcal{MD}^*_{g,n} \longrightarrow \mathbb{R}_+.$$  A rigorous proof of the continuity would deserve a general discussion on the topology of $\mathcal{MD}^*_{g,n}$ which one will find in \cite{Veech}. The idea though is rather simple. There exists natural coordinates on $\mathcal{MD}^*_{g,n}$ (that one should probably call \textit{'period coordinates'} in reference to the case of translation surfaces) which make the natural (local) identification of $\mathcal{MD}^*_{g,n}$ with spaces of polygons continuous. From such polygonal representations it is easy to see that any embedded cylinder in a dilation surface survives small deformations of such a surface and that the angle varies continuously. We nonetheless spare the reader a detailed discussion on the topology of $\mathcal{MD}^*_{g,n}$ as in the case of dilation tori with two singularities, this topology is completely explicit. 

\section{Tori with two singularities}
\label{twosingularities}

\noindent From now on and until the end of this article we will only work with $\T$ and $\M$ and forget about non-triangulable dilation surfaces.  We will also restrict our attention to the case $g=1$ and $n=2$, namely to tori with two singularities. 

\vspace{2mm}

\noindent For the remainder of the article, $T$ is a torus (thought of as a topological surface), $p_1$ and $p_2$ two marked points on $T$ and $T^*$ is $T \setminus \{ p_1, p_2 \}$. All dilation structures will be thought of as structures on the underlying surface $T$ with singularities at $p_1$ and $p_2$. For any dilation structure on $T$, the angle around any singular point is necessary $2\pi$. Thus the singularity type of a dilation torus is completely determined by the dilation factor around $p_1$ and $p_2$ which we denote by $\lambda_1$ and $\lambda_2 \in \R^*_+$. Since $\lambda_1 \lambda_2 = 1$ we only need to know $\lambda_1 = \lambda$. Without loss of generality we can suppose that $\lambda > 1$. 

\vspace{2mm}

\noindent As of now we denote by $$\TT \subset \mathcal{TD}_{1,2} \  \text{(resp.} \  \MT \subset \mathcal{MD}_{1,2}\text{)}$$ the set of triangulable dilation tori with two singularities whose dilation factor at $p_1$ is $\lambda >1$. Note that both $\TT$ and $\MT$ are saturated sets of the isoholonomic folation and are  $\mathrm{SL}(2,\mathbb{R})$-invariant so we can freely speak of these two objects restricted to $\TT$ and $\MT$ .

\subsection{Invariant measure on the moduli space}

In this particular case, it happens that the isoholonomic foliation and the one induced by the $\mathrm{SL}(2,\R)$ agree, simply because the leaves have same dimension. This fact makes it easy to build an invariant measure for the $\mathrm{SL}(2,\mathbb{R})$-action. 

\vspace{3mm}

\noindent Consider $(a,b)$ a symplectic basis of $T$. We can complete it into a basis of $\mathrm{H}_1(T^*, \Z)$ by adding $c$ the class of a loop turning around $p_1$. The linear holonomy of a dilation structure in $\TT$ is a group homomorphism $\rho :  \mathrm{H}_1(T^*, \Z) \longrightarrow \R^*_+$ such that $\rho(c) = \lambda$. We denote by $\rho_a$ and $\rho_b$ the value of $\rho$ on $a$ and $b$. The symplectic form $$ \omega = \mathrm{d}\log \rho_a \wedge \mathrm{d}\log \rho_b $$ defines a volume on the affine subspace of $\mathrm{H}_1(T^*, \R^*_+)$ of elements $\rho$ such that $\rho(c) = \lambda$. This  form is invariant by the action of the pure mapping class group and hence defines a symplectic\footnote{This form is not symplectic in the usual sense of the term. It is a closed $2$-form vanishing when restricted to the isoholonomic foliation and which is symplectic on two manifold which is transverse to the foliation} form transverse to the isoholonomic foliation of $\MT$. 

\vspace{3mm}

\noindent On each leaf of the isoholonomic foliation we can put a measure which is the trace of a Haar measure of $\mathrm{SL}(2, \R)$. This family of measure coupled to the transverse structure form a measure $\mu$ on the total space $\MT$ which is by definition $\mathrm{SL}(2,\R)$-invariant. This measure is in the same class as the Lebesgue measure.

\subsection{Polygonal models}
\label{subsecpolmod}
We made the hypothesis that we are working exclusively with triangulable dilation tori. From any triangulation we can extract a \textit{pseudo-polygonal} model of a given dilation surface $\Sigma$ (a torus as it happens). In order to achieve this one has to consider a sub-graph $\Gamma$ in the $1$-skeleton of this triangulation which is maximal with respect to the property that $\Sigma \setminus \Gamma$ is connected. For such a $\Gamma$, $\Sigma \setminus \Gamma$ is simply connected and one can consider 

$$ \mathrm{D} : \Sigma \setminus \Gamma \longrightarrow \C $$ the developing map of the dilation structure. It is a \textit{pseudo-polygon}, an immersion of the disk which extends to its boundary and which is piecewise geodesic restricted to this boundary. Moreover, since this pseudo-polygon comes from a dilation structure, sides of the boundary that are identified in $\Sigma$ must be parallel.

\vspace{2mm}

\noindent Because of Euler characteristic considerations, the number of sides of pseudo-polygons only depends on $g$ and $n$. In the particular case of a torus $T$ with two singularities, an associated pseudo-polygon must have six sides which project onto a graph $\Gamma \subset T$ with three edges. 

\noindent To every such pseudo-polygonal model is associated a \textit{gluing pattern} which is the datum of the sides which are glued together. On the Figure \ref{gluingpatterns} below are represented the two possible gluing patterns for a dilation torus. 

\begin{figure}[!h]
  \centering
  \includegraphics[scale=0.4]{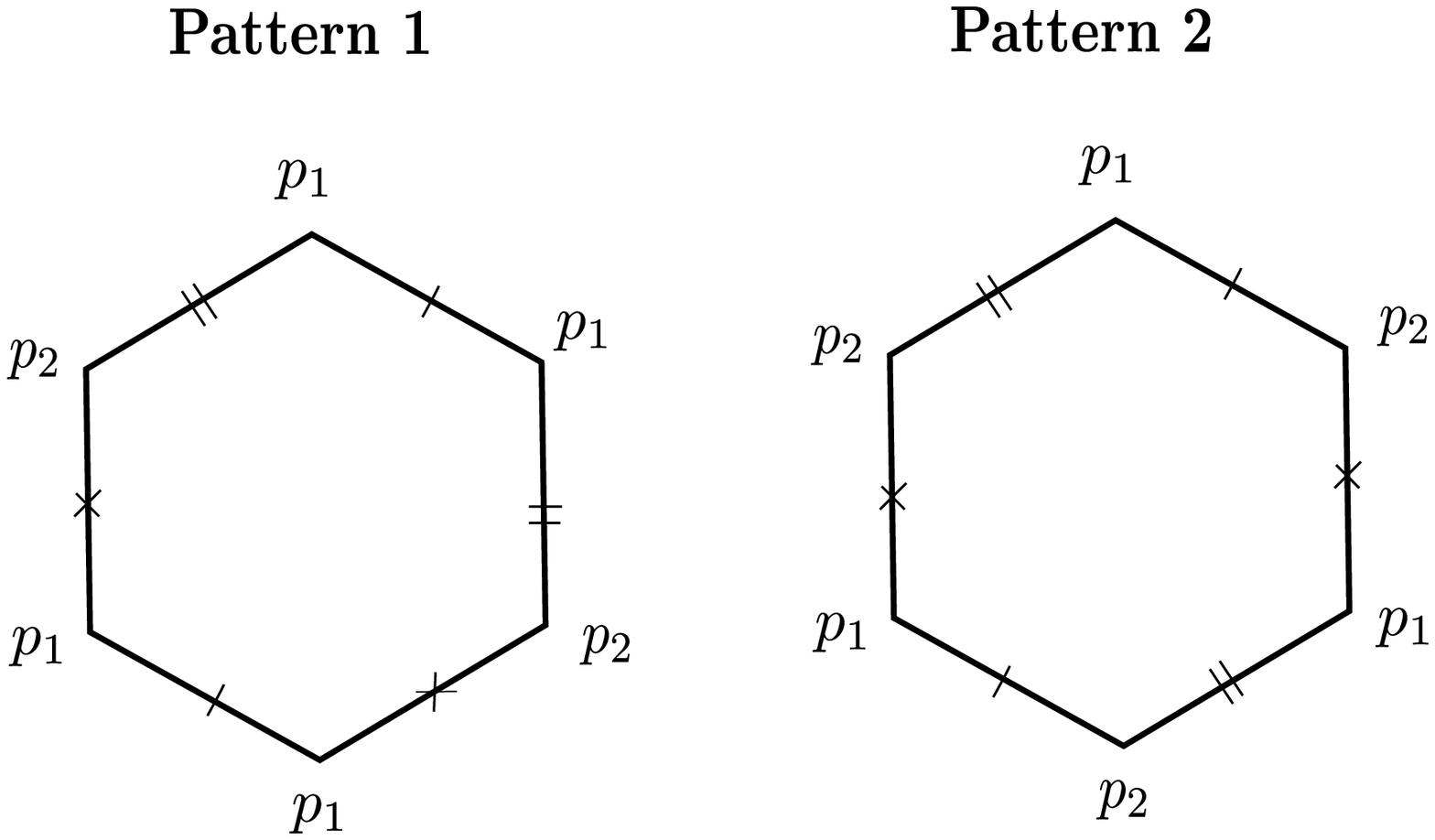}
  \caption{}
  \label{gluingpatterns}
\end{figure}

\noindent By cutting a triangle and pasting it using one of the identifications, one can easily move from Pattern $2$ of Figure 1 to Pattern $1$. We  have the following

\begin{lemma}
\label{polygon}
A pseudo-polygon with six sides such that each side is parallel to another one is actually a regular polygon (meaning that the immersion in definition is actually an embedding).

\end{lemma}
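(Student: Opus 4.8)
The plan is to reduce the statement to the embeddedness of the boundary hexagon and then to exploit that its sides occur in parallel pairs. Write $D \colon \overline{\mathbb{D}} \to \C$ for the pseudo-polygon: an immersion of the closed disc, piecewise geodesic on $\partial\mathbb{D}$, and set $\gamma = D|_{\partial\mathbb{D}}$, the developed hexagon. Since $\mathbb{D}$ is connected and $D$ is an immersion, $\det \mathrm{d}D$ has constant sign, which I may assume positive after composing with a reflection. Then for every $p \notin \gamma$ the number of preimages $\# D^{-1}(p)$ equals the topological degree of $D$ at $p$, that is, the winding number $w_{\gamma}(p)$ of $\gamma$ about $p$. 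Consequently $D$ is injective, hence an embedding, as soon as $w_{\gamma} \le 1$ everywhere, and this is what I would aim to prove.

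To analyse $w_{\gamma}$ I first fix notation. Label the oriented sides $e_1,\dots,e_6 \in \C$ in cyclic order, so $\sum_i e_i = 0$. The hypothesis that every side is parallel to another, together with the two admissible gluing patterns and the cut-and-paste move reducing Pattern $2$ to Pattern $1$, lets me take the pairing to be by opposite sides: $e_{i+3} = -a_i e_i$ with $a_i > 0$, so the six sides realise only three directions. Gauss--Bonnet for the flat metric pulled back by $D$ (the pull-back is flat, the sides are geodesic and $\chi(\mathbb{D}) = 1$) gives total exterior angle $2\pi$, i.e.\ $\gamma$ has turning number $1$. The parallel structure now controls the winding number: for a generic ray issued from $p$, the two sides of each pair, being anti-parallel, are met with opposite signs, so their contributions tend to cancel and $w_{\gamma}(p) = c_1 + c_2 + c_3$ with each $c_i \in \{-1,0,1\}$. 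This already yields the crude bound $|w_{\gamma}| \le 3$.

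The heart of the matter, and the step I expect to be hardest, is upgrading this to $w_{\gamma} \le 1$, equivalently showing $\gamma$ has no self-crossing. Here the immersion hypothesis is genuinely indispensable: closure and parallelism \emph{alone} do not force embeddedness, for one can write down closed hexagons with three pairs of parallel sides and $\sum_i e_i = 0$ that cross themselves (indeed some even have complementary regions of winding number $-1$). What rules these out is that a positively immersed disc can only have \emph{positive} self-intersections, each of which produces a nested region of winding number $\ge 2$; the plan is to show that two of the $c_i$ cannot simultaneously equal $+1$ at such a point, by playing the anti-parallel cancellation against the constraints that the turning number is exactly $1$ and that the three directions occur in their prescribed cyclic order along $\gamma$. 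Making this incompatibility precise --- that is, proving an immersed hexagon with three pairs of parallel sides never doubly covers a region --- is the crux; once $w_{\gamma}\le 1$ is in hand, the reduction of the first paragraph closes the argument.
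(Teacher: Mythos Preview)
Your framework (degree $=$ winding number, Gauss--Bonnet giving turning number $1$, the pairwise anti-parallel cancellation yielding $w_\gamma = c_1+c_2+c_3$ with each $c_i\in\{-1,0,1\}$) is correct, but the proof is not finished: you explicitly leave the decisive step --- ruling out $w_\gamma \ge 2$ --- as ``the crux'' with only a vague plan (``playing the anti-parallel cancellation against the constraints that the turning number is exactly $1$\ldots''). That is precisely where all the content lies, and nothing you have written forces two of the $c_i$ from being simultaneously $+1$. As you yourself note, there \emph{are} closed hexagons with three pairs of parallel sides that self-cross, so the immersion hypothesis must be used in a sharper way than just fixing the turning number; until that argument is actually produced, the proposal is an outline, not a proof. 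There is also a minor issue with your reduction: the cut-and-paste move changes the pseudo-polygon, so you would still owe an argument that embeddedness of the new model implies embeddedness of the original one.

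By contrast, the paper's proof is a two-line piece of plane geometry and avoids all of this machinery. For Pattern~$1$ one singles out the unique pair of \emph{opposite} identified sides $A,A'$; the remaining pairs $(B,B')$ and $(C,C')$ are each adjacent to $A$ (resp.\ $A'$). Since $B$ and $C$ share a vertex they meet, and since $B'\parallel B$ and $C'\parallel C$ the same is forced for $B',C'$; the six segments then visibly bound a non-degenerate hexagon. Pattern~$2$ is reduced to Pattern~$1$ by cutting and re-gluing a triangle, together with the observation that this operation preserves non-degeneracy. So the key idea you are missing is simply that in Pattern~$1$ the combinatorics of which sides are adjacent to which already pin down a convex-like configuration directly, with no need for winding-number bookkeeping.
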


\begin{proof}

We first give the proof for Pattern $1$. We are going to prove that the image the developing map $D$ of the boundary of $\Sigma \setminus \Gamma$ is a non-degenerated polygon which implies the Lemma. Pick the two sides which are paired and opposite to each other. Their image under  $D$ are two parallel segments $A$ and $A'$. The two sides issued from each of these sides are parallel (say $B$ and $B'$ are issued from the extremities of $A$ and $C$ and $C'$ are issued from the extremities of $A'$). The sides $B$ and $C$ must meet and if they do so must $B'$ and $C'$ since $B$ and $B'$ are parallel and $C$ and $C'$ are parallel. But in that case $A \cup A' \cup B \cup B' \cup C \cup C'$ is the boundary of a non-degenerated polygon.

\noindent A cut and paste operation transforms and pseudo-polygonal model with Pattern $2$ to one with Pattern $1$. One can apply what we have just proved in the preceding paragraph to get that this new pseudo-polygonal model is non-degenerated. We get back to Pattern $2$ doing the opposite cut and paste operation. Just notice that cutting and pasting any triangle from a hexagonal model whose paired sides are parallel gives rise to a non-degenerated polygonal model. 

\end{proof}

\noindent Lemma \ref{polygon} therefore ensures that any torus with two singularities is built out of a proper hexagon. This fact is going to be used extensively in the sequel.

\subsection{Decompositions in cylinders}
\label{subsecdecomposition}

Informally, a \textit{decomposition in cylinders} is a way to build a dilation torus by gluing two dilation cylinders along their boundaries. As it would be a bit painful to give a precise description of the gluing operation that one needs to perform to obtain a well-defined dilation surface, we will use another definition. 

\begin{defi}
A decomposition in cylinders of dilation torus with two singularities is the datum of two disjoint closed saddle connections.
\end{defi}

\noindent It would be a good exercise of affine geometry to show that the complement in a dilation torus of two such saddle connections is actually the union of two dilation cylinders of same angle.

\begin{proposition}

\begin{enumerate}
\item Every element of $\MT$ can be decomposed into cylinders.
\item The dynamics in every direction belonging to one of the two cylinders is Morse-Smale. 

\item Conversely, every Morse-Smale direction corresponds to such a cylinder decomposition.

\end{enumerate}
\end{proposition}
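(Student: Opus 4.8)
The plan is to establish the three statements more or less independently, leaning throughout on Lemma~\ref{polygon} (hexagonal model) and the earlier description of dilation cylinders as quotients of angular sectors. For part~(1), the strategy is to exhibit at least one pair of disjoint closed saddle connections on any $\Sigma \in \MT$. Since every such torus arises from a proper hexagon with parallel paired sides, I would work with the three edges of the associated graph $\Gamma \subset T$ (the images of the paired sides). Each pair of parallel identified sides projects to a closed curve on $T$; the key claim is that at least one such edge is a geodesic segment running between the two singularities, i.e.\ a saddle connection, and that the gluing pattern forces two of these to be disjoint. The complement of two disjoint saddle connections is, as remarked in the excerpt, a union of two dilation cylinders of equal angle, so producing the two disjoint saddle connections is exactly producing the decomposition. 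The cleanest route is to start from Pattern~$1$: the two opposite paired sides $A, A'$ of the hexagon glue to a single closed saddle connection, and one of the remaining parallel pairs glues to a second one disjoint from the first; a short check of the combinatorics of the hexagon identifications shows these two curves bound the two cylinders.

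For part~(2), I would use the explicit geometry of a dilation cylinder $\mathcal{C}_{\theta,\lambda}$, realized as the quotient of an angular sector $\mathcal{S}_\theta$ by $z \mapsto \lambda z$. A direction $\alpha$ lies inside the angular opening of the cylinder precisely when the foliation by lines of slope $\alpha$ enters the sector transversally to the radial boundary; I would compute the first-return map of this directional foliation to a boundary circle of the cylinder and observe that, because the holonomy around the core curve is multiplication by $\lambda \neq 1$, every leaf spirals monotonically toward one boundary component in forward time and the other in backward time. Concretely, a leaf hitting the radius at distance $r$ from the vertex returns at distance $\lambda^{\pm 1} r$, so iterating drives $r \to 0$ or $r \to \infty$, i.e.\ the leaf accumulates on the two boundary saddle connections, which are the attracting/repelling closed leaves. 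Since both cylinders share the same saddle connections as boundary, the $\alpha$- and $\omega$-limits of every regular leaf on the whole torus are these closed leaves, which is exactly the definition of Morse--Smale given in Section~\ref{dilation}.

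For the converse, part~(3), the plan is to start from a Morse--Smale direction $\alpha$ and recover a cylinder decomposition from its finitely many closed leaves. By definition there is a finite set of closed attracting or repelling leaves absorbing all regular leaves; the complement of these closed leaves is a union of open subsurfaces each of which is foliated by leaves whose forward and backward limits are among the closed leaves. I would argue that each such complementary region is swept out by a one-parameter family of leaves joining a repelling closed leaf to an attracting one, and that this region is therefore affinely isomorphic to a dilation cylinder $\mathcal{C}_{\theta,\lambda}$; identifying the closed leaves as the boundary saddle connections then exhibits the decomposition into (here, two) cylinders. A Poincar\'e--Bendixson-type argument on the transversally affine foliation, combined with the fact that on a torus with two singularities Proposition~\ref{numberofcylinders} bounds the number of cylinders by $3g-3+n = 2$, pins down that there are exactly two cylinders and hence exactly two disjoint saddle connections.

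The main obstacle I anticipate is part~(3), and more precisely showing that each complementary region of the closed leaves is genuinely a \emph{dilation} cylinder rather than some more degenerate affine object: one must rule out the existence of extra recurrence, minimal subregions, or Cantor-type invariant sets inside the Morse--Smale picture, and one must verify that the boundary closed leaves are actual saddle connections passing through the singularities (not smooth closed leaves avoiding $S$, which on a once-punctured-type surface could a priori occur). I expect the hexagonal normal form and the equal-angle property of the two complementary cylinders to do most of the work, but the careful identification of the complementary region with $\mathcal{C}_{\theta,\lambda}$—constructing the developing map onto an angular sector and checking the return dilation is exactly $\lambda$—is where the real content lies. Parts~(1) and~(2), by contrast, should follow fairly directly from the polygonal model and the explicit sector quotient, respectively.
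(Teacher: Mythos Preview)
Your plan for (1) is the paper's plan (hexagonal model with Pattern~$1$), but it contains a small combinatorial slip. In Pattern~$1$ only the pair of \emph{opposite} identified sides projects to a closed saddle connection; the two remaining pairs project to saddle connections joining $p_1$ to $p_2$, not to closed loops. (This is visible in the description of the canonical model in Section~\ref{subsecdecomposition}: the three edges of $\Gamma$ are $l_1,s_1,s_2$ with $s_1,s_2$ arcs.) The paper sidesteps this by reading off the two \emph{cylinders} directly: each of the two non-opposite pairs bounds a quadrilateral in $H$ which projects to a cylinder, and the second closed saddle connection $l_2$ appears as the diagonal of $H$ separating these two quadrilaterals, not as an edge. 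Once you correct this, your argument and the paper's coincide.

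For (2) and (3) the paper simply asserts they are ``direct consequences'' of (1) and gives no further argument. Your treatment of (2) via the explicit first-return map on $\mathcal{C}_{\theta,\lambda}$ is correct and is exactly the computation the paper is implicitly invoking.

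Your plan for (3), however, is substantially over-engineered, and the ``main obstacle'' you anticipate is not there. You are \emph{given} that the direction is Morse--Smale, so there is nothing to rule out: no Cantor invariant sets, no minimal subregions, no recurrence. The argument is one line. A Morse--Smale direction has a closed attracting (or repelling) regular leaf; since its holonomy is $\neq 1$ this leaf lies in a maximal embedded dilation cylinder; maximality forces each boundary component to contain a singular point, hence each is a closed saddle connection; these two closed saddle connections are disjoint and give the decomposition. There is no need for a Poincar\'e--Bendixson argument, and Proposition~\ref{numberofcylinders} is not used. Your worry about closed leaves avoiding $S$ is resolved by passing to the \emph{maximal} cylinder containing them rather than by analysing the leaves themselves.
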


\begin{proof}
Let $T$ be an element of $\MT$. The points $(2)$ and $(3)$ are a direct consequence of the first one hence we only prove $(1)$. Consider $H$ a hexagonal model for $T$ with Pattern $1$. There is a unique pair of identified sides which project onto a closed saddle connection. Consider the two other pairs of identified sides. Each of these bound a quadrilateral in $H$ which project onto a cylinder in $T$. We get by this mean two cylinders which form a cylinder decomposition of $T$.

\end{proof}

\noindent Another consequence of the form of the decomposition in cylinders is the following:

\begin{lemma}
For every $T \in \MT$, the multiplier of any embedded cylinder belongs to $ ]1, \lambda[ $.
\end{lemma}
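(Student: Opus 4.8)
The plan is to identify the multiplier of a cylinder with the linear holonomy of its core curve, and then to compare this holonomy with $\lambda$ by encircling a singularity in a hexagonal model adapted to the cylinder. First, if $\gamma$ is the core of an embedded cylinder $C$ with multiplier $\mu$, then $\gamma$ is freely homotopic in $T^*$ to either boundary component of $C$, and parallel transport once around $\gamma$ is by definition the dilation $z\mapsto\mu z$; hence $\rho(\gamma)=\mu$. With the orientation convention that a dilation cylinder has multiplier $>1$, the lower bound $\mu>1$ is immediate, so the whole content of the lemma is the strict upper bound $\mu<\lambda$.

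For the upper bound I would proceed as follows. By the cylinder–decomposition proposition, $C$ sits inside a decomposition $T=C_1\cup C_2$ into two cylinders glued along two closed saddle connections whose endpoints are the singular points; note first that the two cylinders necessarily have the \emph{same} multiplier $\mu$, since each shared boundary saddle connection is freely homotopic to the core of both cylinders. Using Lemma \ref{polygon} I realise $T$ as a genuine hexagon whose three pairs of identified sides project to saddle connections $\sigma_1,\sigma_2,\sigma_3$, chosen so that the boundary of $C$ is $\sigma_1$; then $\mu=\rho(\sigma_1)=:\kappa_1$, and I set $\kappa_i=\rho(\sigma_i)$. The small loop $c$ around $p_1$ is homotopic to the cyclic chain of hexagon corners meeting at $p_1$, so its linear holonomy is the product of the linear parts of the side–identifications crossed while turning once around $p_1$. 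Orientability forces a definite sign pattern and yields $\lambda=\rho(c)=\kappa_1^{\epsilon_1}\kappa_2^{\epsilon_2}\kappa_3^{\epsilon_3}$, in which the factor $\kappa_1=\mu$ (the side crossed by the core, traversed in the expanding direction) occurs with exponent $+1$; thus $\lambda/\mu$ is the product of the two remaining factors.

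It then remains to see that this complementary factor exceeds $1$. Here I would invoke the planar closing relation of the hexagon, namely that the six pairwise–parallel side vectors sum to zero, which ties the $\kappa_i$ together; combined with the standing normalisation $\lambda>1$, this forces $\lambda/\mu>1$, i.e. $\mu<\lambda$. Conceptually this expresses the fact that an embedded cylinder cannot dilate by more than the ambient holonomy available around a singular point: were $\mu\ge\lambda$, the transversal on which the first–return map acts by multiplication by $\mu$ would be forced to wrap past $p_1$, whose monodromy is only $\lambda$, and the cylinder would fail to be embedded.

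The step I expect to be the main obstacle is this last one — making the sign bookkeeping around $p_1$ precise and, above all, obtaining it \emph{uniformly} over all cylinder directions, since different embedded cylinders present their cores as different pairs of sides and a priori cross different $\sigma_i$. The cleanest way around this is to adapt the hexagon to the given cylinder (so that its boundary is always $\sigma_1$), reducing every case to the single computation above; checking the resulting inequality among the $\kappa_i$ from the closing relation together with $\lambda>1$ is then where the genuine work lies.
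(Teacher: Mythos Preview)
Your proposal contains a genuine error that short-circuits the argument, and ironically the correct version of that very step \emph{is} the paper's proof.

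The mistake is the assertion that the two cylinders $C_1,C_2$ in the decomposition have the same multiplier because ``each shared boundary saddle connection is freely homotopic to the core of both cylinders''. The boundary saddle connections pass through the singular points, so they are not loops in $T^*$ at all; and if you push the core of $C_1$ to one side of the saddle connection $l_1$ (based at $p_1$) and the core of $C_2$ to the other side, these two loops differ, in $H_1(T^*,\mathbb{Z})$, by exactly the small loop $c$ around $p_1$. With the natural orientation conventions one gets $\rho_a\cdot\rho_b=\rho(c)=\lambda$, not $\rho_a=\rho_b$. Since both multipliers are by definition strictly greater than $1$, this immediately gives $\rho_a<\lambda$ and $\rho_b<\lambda$, which is the lemma. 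This is precisely the paper's proof: slide the core of one cylinder across a singular point to obtain (the inverse of) the core of the other, read off $\rho_a\rho_b=\lambda$, and conclude.

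Everything after your erroneous step --- the hexagonal bookkeeping, the product $\kappa_1^{\epsilon_1}\kappa_2^{\epsilon_2}\kappa_3^{\epsilon_3}$, the appeal to a ``planar closing relation'' --- is unnecessary, and in any case you never make that last step precise (the closing relation constrains the side \emph{vectors}, and you do not explain how it yields an inequality among the \emph{dilation ratios} $\kappa_i$). Note also that two of the three $\sigma_i$ are arcs from $p_1$ to $p_2$, so writing $\rho(\sigma_i)$ as if it were holonomy of a loop is ill-posed; you presumably mean the gluing ratios, but then the sign analysis you flag as the ``main obstacle'' really is needed and is not supplied. Fixing the homotopy claim makes all of this moot.
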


\begin{proof}

Any cylinder participate to a decomposition in cylinders. One can take a curve along which one can compute the linear holonomy of the first cylinder  and slide it over one of the singular points to get a curve along which one compute the inverse of the holonomy of the second cylinder. The multipliers $\rho_a$ and $\rho_b$ of the two respective cylinders $C_a$ and $C_b$ thus satisfy the following 

$$ \rho_a \rho_b = \lambda.$$ The fact $\rho_a$ and $\rho_b$ are by definition greater or equal to $1$ implies the Lemma.
\end{proof}

\paragraph*{\bf The canonical polygonal model for a decomposition}

We define in this subsection a polygonal model associated to a decomposition in cylinders of dilation torus $T$ which is in a sense of "minimal complexity".  

\vspace{3mm}

\noindent Let $T$ be a dilation torus. We consider a cylinder decomposition of $T$.
\begin{itemize}
\item  $C_a$ and $C_b$ are the two cylinders;
\item  $l_1$ (respectively $l_2$) the closed saddle connection based at $p_1$ (respectively $p_2$);
\item $\rho_a$ (respectively $\rho_b$) is the dilation factor of $C_a$ (respectively $C_b$).
\end{itemize}
\noindent Without loss of generality, we can suppose that $\rho_a < \rho_b$. Choosing a polygonal model with Pattern $1$ associated with this decomposition amounts to choosing two saddle connections $l_a$ and $l_b$ joining $p_1$ and $p_2$ whose interiors are respectively contained in $C_a$ and $C_b$. We give a canonical way to choose such a pair of saddle connections. 

\noindent Let $v$ be the unique segment issued from $p_1$ in the direction perpendicular to the one of $l_1$, whose end point is on $l_2$ and whose interior is contained in $C_a$. There is a unique saddle connection $s_1$ in $C_a$ from $p_1$ to $p_2$ which does not intersect $v$ and such that the angle between $v$ and $s_1$ is non-negative. 
\noindent Similarly we consider $v'$ the unique segment issued from $p_2$ in the direction perpendicular to the one of $l_2$, whose end point is on $l_1$ and whose interior is contained in $C_b$. There is a unique saddle connection $s_2$ in $C_b$ from $p_2$ to $p_1$ which does not intersect $v'$ and such that the angle between $v'$ and $s_2$ is non-negative.

\begin{figure}[!h]
\centering
\psfrag{p1}[][][0.8]{$p_1$}
\psfrag{p2}[][][0.8]{$p_2$}
\psfrag{s1}[][][0.8]{$s_1$}
\psfrag{s2}[][][0.8]{$s_2$}
\psfrag{v}[][][0.8]{$v$}
\psfrag{v}[][][0.8]{$v'$}
\includegraphics[scale=0.31]{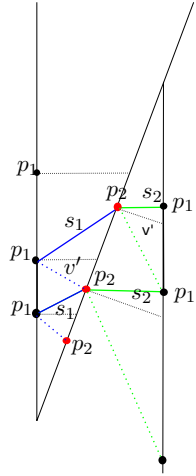}
\caption{A canonical model associated to a decomposition}
\label{canonical}
\end{figure}


\begin{defi}
We call canonical polygonal model associated to the decomposition in cylinders associated to $C_a$ and $C_b$ the one given by cutting along $l_1$, $s_1$ and $s_2$. 
\end{defi}

\noindent Such a decomposition has the pleasant property that the dilation gluing factor between the two sides of the associated hexagon that project onto $l_1$ is bounded above by $\rho_a < \lambda$ and below by $\rho_b^{-1} > \lambda^{-1}$.

\subsection{Directional foliations and piecewise affine homeomorphisms of the circle}

We give a classification of the possible dynamical behaviours for the directional foliations of dilation tori with two singularities. 

\begin{proposition}
\label{classification}
The vertical foliation of a triangulable dilation tori with two singularities is of one the following type:

\begin{enumerate}

\item It has one attracting leaf and one repelling leaf which are respectively the $\omega$-limit and the $\alpha$-limit of every other leave. 

\item It has one attracting closed saddle connection and one repelling closed saddle connection which are respectively the $\omega$-limit and the $\alpha$-limit of every other leaf. 

\item It is completely periodic.

\item It is minimal.

\end{enumerate}

\end{proposition}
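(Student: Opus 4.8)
The plan is to pass to the first return map of the vertical foliation on a closed transversal and to read off the four cases from the classical theory of circle homeomorphisms. Fix a closed curve $\sigma \subset T^*$ transverse to the vertical foliation (for instance a closed leaf of a nearby transverse directional foliation); since $T$ is compact and the only singularities are the two cone points, the first return map $f \colon \sigma \to \sigma$ is a well-defined orientation-preserving circle homeomorphism, piecewise affine with exactly two break points, one on each separatrix issued from $p_1$ and $p_2$. Crossing such a break point changes the return itinerary by a small loop around the corresponding singularity, so the jump of $\log Df$ there is the logarithm of the linear holonomy of that loop; the two jumps are thus $\log\lambda$ and $-\log\lambda$ and the total variation of $\log Df$ is $2\log\lambda$. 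Under this dictionary closed leaves correspond to periodic orbits of $f$, a closed leaf through a singularity corresponds to a periodic orbit meeting a break point, and (by the preceding proposition on cylinder decompositions together with Proposition \ref{numberofcylinders}, bounding the number of cylinders in a direction by $3g-3+n=2$) a cylinder in the vertical direction corresponds to an attracting/repelling periodic orbit. The dynamical type of the foliation is therefore determined by that of $f$.

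First I would treat the case where the rotation number $\mathrm{rot}(f)$ is rational, say $p/q$. Then $f^q$ has fixed points, and between two consecutive ones it is strictly monotone with respect to the identity, so periodic orbits alternate between attracting and repelling. If every point is periodic we are in the completely periodic case (3); otherwise there are finitely many periodic orbits, and the bound on the number of cylinders forces exactly one attracting and one repelling periodic orbit, which are the $\omega$- and $\alpha$-limits of every regular leaf. Translating back, whether the corresponding closed leaves avoid the singularities (two regular closed leaves, case (1)) or pass through them (two closed saddle connections, case (2)) is decided by whether the periodic orbits meet the break points. Here I would check that in the two-singularity situation the two degenerations occur simultaneously, at the boundary directions of the common angular range of the two cylinders, so that the attracting and repelling leaves are of the same regular-or-saddle-connection type, matching precisely the dichotomy (1) versus (2).

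The remaining case is $\mathrm{rot}(f)$ irrational. By Poincar\'e's theorem $f$ is semiconjugate to the rotation $R_{\mathrm{rot}(f)}$ and has a unique minimal set, which is either all of $\sigma$ (so $f$, and hence the foliation, is minimal: case (4)) or an exceptional Cantor set supported away from a wandering interval. The heart of the argument, and the step I expect to be the main obstacle, is to exclude this second possibility and so show that the irrational case is always minimal. The plan is a Denjoy-type estimate: since $\log Df$ has bounded variation, the Denjoy--Koksma inequality applied at the continued-fraction denominators $q_n$ of $\mathrm{rot}(f)$ yields, uniformly in $x$,
$$\bigl|\log Df^{q_n}(x) - q_n\chi\bigr| \le 2\log\lambda,$$
where $\chi = \int \log Df\, d\mu$ is the Lyapunov exponent against the unique invariant measure $\mu$. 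Because $f^{q_n}$ is a circle homeomorphism one has $\int_\sigma Df^{q_n}\,dx = 1$, which is incompatible with $q_n\chi \to \pm\infty$; hence $\chi = 0$ and $|\log Df^{q_n}(x)| \le 2\log\lambda$ uniformly. This bounded distortion along the times $q_n$ is exactly the input of the classical Schwartz--Denjoy no-wandering-interval argument, which then contradicts the existence of a wandering interval.

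The delicate point, and the reason the number of break points matters rather than just their presence, is to guarantee that the repeated passages of the orbit through the two break points do not spoil the distortion control needed to run the Schwartz--Denjoy scheme; this is where the finiteness and smallness of the total variation $2\log\lambda$, and the fact that there are only two break points lying on two distinct separatrices, are essential. I expect the bulk of the technical work of the proof to concentrate precisely in this exclusion of the exceptional Cantor behaviour, the other three cases following readily from the periodic-orbit analysis and the cylinder dictionary above.
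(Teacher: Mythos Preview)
Your approach is essentially the same as the paper's: reduce to the first return map on a closed transversal, split on whether the rotation number is rational, and in the irrational case invoke Denjoy to rule out wandering intervals. Two remarks are worth making.

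First, you skip the one place where the hypothesis ``triangulable'' is actually used. The paper argues that triangulability forbids cylinders of angle $\ge \pi$, hence the vertical foliation has no Reeb component, and \emph{this} is what guarantees the existence of a global closed transversal $\sigma$. Your sentence ``Fix a closed curve $\sigma \subset T^*$ transverse to the vertical foliation (for instance a closed leaf of a nearby transverse directional foliation)'' is not a justification: a nearby directional foliation need not have a closed leaf, and in the presence of a Reeb component no such $\sigma$ exists at all. This is easy to fix, but it is the step that consumes the hypothesis.

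Second, you anticipate that excluding the exceptional Cantor minimal set will be ``the bulk of the technical work'' and set up a Denjoy--Koksma/Schwartz argument by hand. In the paper this is a one-line citation: $f$ is piecewise affine, so $\log Df$ has bounded variation, and the classical Denjoy theorem (valid under the bounded-variation hypothesis on $\log Df$, not just for $C^2$ diffeomorphisms) already gives minimality. Your derivation is correct --- the Denjoy--Koksma bound with $\mathrm{Var}(\log Df)=2\log\lambda$ and the chain rule do force $\chi=0$ and uniform distortion along the times $q_n$ --- but it is reproving a theorem you can simply quote. Conversely, your treatment of the rational case is more careful than the paper's rather telegraphic version, in particular your observation that the attracting and repelling closed orbits degenerate to saddle connections simultaneously at the boundary of the common angular range of the two cylinders.
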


\begin{proof}
If $T$ is endowed with a triangulable dilation structure, then its vertical foliation does not have any Reeb component for otherwise it would contain a cylinder of angle at least $\pi$. Hence there exists a simple closed curve in $T$ which is transverse to the vertical foliation and the first return map to this curve defines a piecewise affine homeomorphism of the circle $f: S^1 \longrightarrow S^1$.  We distinguish on whether the rotation number of $f$ is irrational or not. 

\noindent If it is irrational, $f$ is semi-conjugated to the a minimal rotation. Since $f$ is piecewise affine, its derivative has bounded variations and Denjoy theorem applies: the semi-conjugation is an actual conjugation. Hence $f$ is minimal and so is the vertical foliation.

\noindent Otherwise, the rotation number of $f$ is rational and $f$ has a periodic orbit. Assume the closed leaf associated with this periodic orbit does not meet any singular point. This leaf is either hyperbolic or parabolic (meaning that the derivative of the first return map of the flow near this periodic is either different or equal to $1$). In the first case, this leaf is contained in a cylinder itself being part of a cylinder decomposition in which case we are in  case (1). 

\noindent Finally if the foliation has a closed saddle connection then it bounds a cylinder which is either flat and in which case the foliation is totally periodic(this case is the same as above) or it is a dilation cylinder in which case we are in case (2). 

\end{proof}

\section{The geometry of dilation tori}
\label{geometry}

\subsection{Dilation modulus of a dilation cylinder.}

We introduce another geometric quantity associated to a cylinder which we call its \textit{dilation modulus}.

\begin{defi}
Let $\mathcal{C}$ be a dilation cylinder of angle $\theta < \frac{\pi}{2}$ and of dilation factor $\rho > 1$. Its dilation modulus (or simply modulus) is the quantity 

$$ \frac{\tan \theta}{\rho - 1}.$$  
\end{defi}

\noindent If we are dealing with a flat cylinder, we will use the standard notion of modulus that is the ratio between its length and width.

\noindent The dilation modulus measures how 'long and thin' a cylinder is. Indeed assume you form a cylinder by gluing two opposite sides $A$ and $A'$ of a quadrilateral and that one of the two sides which are not glued together, say $B$, is perpendicular to $A$ and $A'$. Assume that $A$ is shorter that $A'$. Then the modulus of this cylinder is equal to the length of $B$ divided by the length of $A$.
\noindent Notice that this notion of modulus is only defined for cylinders whose angle is less than $\frac{\pi}{2}$. Hence this 'long and thin' heuristic only applies when the angle is less than $\frac{\pi}{2}$.

\vspace{2mm}

\noindent The reason why we care about this quantity is because cylinders of large modulus force the function $\Theta$ to be small. Indeed we have the following lemma:

\begin{lemma}
\label{smallangle}
Let $T$ be a dilation torus with two singular points. There exists a constant $\kappa$ such that the following holds: assume $T$ has a cylinder $\mathcal{C}$ of modulus $M$ and of angle $\theta$ then 

$$\Theta(T) \leq \max \big\{ \theta, \frac{\kappa}{M} \big\} $$
\end{lemma}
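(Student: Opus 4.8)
The plan is to argue by contradiction on the existence of a cylinder strictly larger than $\mathcal{C}$, and to turn the topological fact that its core must cross the core of $\mathcal{C}$ into a quantitative constraint via conformal moduli. First I would dispose of the trivial case: since $\mathcal{C}$ is itself a cylinder of angle $\theta$ in $T$, one always has $\Theta(T)\geq\theta$, so it suffices to treat the case $\Theta(T)>\theta$ and to prove that then $\Theta(T)\leq \kappa/M$. Let $\mathcal{C}'$ be a cylinder realising $\Theta(T)$; write $\Phi:=\Theta(T)>\theta$ for its angle and $\rho'$ for its multiplier. By the lemma bounding multipliers of embedded cylinders, both the multiplier $\rho$ of $\mathcal{C}$ and $\rho'$ lie in $]1,\lambda[$.

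The geometric input is that the cores $\gamma$ of $\mathcal{C}$ and $\gamma'$ of $\mathcal{C}'$ must cross. If $\gamma$ and $\gamma'$ were isotopic they would point in the same direction, so in that direction $T$ would carry two cylinders sharing an isotopic core; these must be the two equal-angle cylinders of a single cylinder decomposition, forcing $\theta=\Phi$ and contradicting $\Phi>\theta$. Hence $\gamma$ and $\gamma'$ are non-isotopic essential simple closed curves on the torus, so $i(\gamma,\gamma')\geq 1$ and the two cylinders genuinely cross.

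The key computation is to pass to the logarithmic chart $w=\log z$, under which the dilation cylinder obtained from the sector $\mathcal{S}_{\psi}$ by $z\mapsto\sigma z$ becomes the flat cylinder $\{\,0<\operatorname{Im}w<\psi\,\}/(w\sim w+\log\sigma)$, whose conformal modulus is $\psi/\log\sigma$. Thus $\operatorname{mod}(\mathcal{C})=\theta/\log\rho$ and $\operatorname{mod}(\mathcal{C}')=\Phi/\log\rho'$. Since $\mathcal{C}$ and $\mathcal{C}'$ are embedded annuli whose cores cross, I would invoke the extremal-length inequality $i(\gamma,\gamma')^2\leq \operatorname{Ext}(\gamma)\operatorname{Ext}(\gamma')$ together with $\operatorname{mod}(A)\leq 1/\operatorname{Ext}(\text{core})$ for any embedded annulus, which yields the crossing bound $\operatorname{mod}(\mathcal{C})\cdot\operatorname{mod}(\mathcal{C}')\leq 1$, that is $\theta\,\Phi\leq \log\rho\,\log\rho'$.

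Finally I would convert this into a bound on $M$. Using $\log\rho\leq\rho-1$, $\tan\theta\geq\theta$ and $\log\rho'\leq\log\lambda$, the inequality $\Phi\leq(\log\rho\,\log\rho')/\theta$ gives
$$ M\,\Phi \;=\; \frac{\tan\theta}{\rho-1}\,\Phi \;\leq\; \frac{\tan\theta}{\theta}\cdot\frac{\log\rho}{\rho-1}\cdot\log\rho' \;\leq\; \frac{\tan\theta}{\theta}\,\log\lambda, $$
so that $\Theta(T)=\Phi\leq\kappa/M$ with $\kappa=\kappa(\lambda)$ as soon as $\theta$ stays bounded away from $\frac{\pi}{2}$, which is exactly the regime in which the statement is applied. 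The main obstacle is the uniformity of $\kappa$ as $\theta\to\frac{\pi}{2}$: the conformal-modulus estimate loses a factor $\tan\theta/\theta$, and obtaining $M\,\Phi\leq\kappa$ in full generality seems to require replacing the third step by a direct computation in the explicit hexagonal model — measuring how the angular width of a transverse cylinder is throttled when its core crosses a cylinder of modulus $M$ — an estimate that produces the factor $\tan\theta$ natively and therefore matches the definition of the dilation modulus.
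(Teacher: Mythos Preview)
Your approach via conformal moduli and the extremal--length crossing inequality is genuinely different from the paper's argument, and the reduction $\operatorname{mod}(\mathcal{C})\cdot\operatorname{mod}(\mathcal{C}')\le 1$ together with the logarithmic identification $\operatorname{mod}(\mathcal{C})=\theta/\log\rho$ is correct and elegant. The paper instead argues purely in the affine picture: any other cylinder $\mathcal{D}$ either is the companion cylinder of the decomposition containing $\mathcal{C}$ (hence has the same angle $\theta$), or must cross $\mathcal{C}$; in the latter case the part of $\mathcal{D}$ inside $\mathcal{C}$ is an angular wedge of opening $\alpha$ which has to traverse the long direction of a quadrilateral of aspect ratio $M$ without hitting the singular points on the boundary, and this forces $\alpha\le\arctan(1/M)$ directly.

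The difficulty you flag in your last paragraph is a genuine gap, not a cosmetic one. Your inequality yields $M\Phi\le(\tan\theta/\theta)\log\lambda$, and the factor $\tan\theta/\theta$ is intrinsic to the comparison between the \emph{conformal} modulus $\theta/\log\rho$ and the \emph{dilation} modulus $\tan\theta/(\rho-1)$: when $\theta\to\pi/2$ with $\rho$ bounded away from $1$, the dilation modulus blows up while the conformal modulus stays bounded, so the crossing bound on conformal moduli simply carries no information about $M$. Concretely, for $\theta$ close to $\pi/2$ and $\rho=2$ one has $M\sim\tan\theta\to\infty$ but $\operatorname{mod}(\mathcal{C})\sim(\pi/2)/\log 2$, and your bound on $\Phi$ becomes $\Phi\lesssim\log\lambda$, which does not force $\Phi\le\kappa/M$ for any $\kappa$ independent of $\theta$. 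The affine argument in the paper does not pass through conformal modulus at all and produces $\alpha\le\arctan(1/M)$ uniformly in $\theta$; this is precisely the ``direct computation in the explicit model'' that you correctly identify as the missing ingredient, and it is what the paper actually does.
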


\begin{proof}
We give a quick sketch of a proof as it is very elementary. Consider any other cylinder $\mathcal{D}$ in $T$ of angle $\alpha$. Either it is the other cylinder of the decomposition to which $\mathcal{C}$ participate in or $\mathcal{D}$ must intersect $\mathcal{C}$. Both boundary components of $\mathcal{D}$ must intersect one of the boundary component of $\mathcal{C}$ (they 'enter' $\mathcal{C}$) and one sees an 'angular strip' (the two lines on the boundary of this strip form an angle $\alpha$ entering $\mathcal{C}$ by one of its boundary component and exiting through the other). This strip does not contain any singular point in its interior. If the modulus is large this forces $\alpha$ to be very small (less than $\arctan \frac{1}{M}$) and a compacity argument plus the fact that $\arctan(\epsilon) \sim \epsilon$ when $\epsilon << 1$ imply the Lemma.
\end{proof}

\subsection{Convex polygonal models}

The rest of this Section is dedicated to prove a weak converse to Lemma \ref{smallangle}: tori $T$ for which $\Theta(T)$ is small must contain a cylinder of not too small modulus. The precise statement is the one of Lemma \ref{largemodulus}. To get to this point we prove the existence of well-behaved polygonal models.

\begin{lemma}
\label{convexexist}
Every dilation torus in $\MT$ can be represented by a convex hexagon with Pattern $2$.
\end{lemma}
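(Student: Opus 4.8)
The plan is to build the convex model by hand from the two--cylinder description of $T$, passing to Pattern $2$ by the triangle cut--and--paste already discussed. First I would fix a decomposition of $T$ into two dilation cylinders $C_a$ and $C_b$ (which exists by the cylinder decomposition proposition), recalling that the two cylinders have the \emph{same} angle $\theta$ and that triangulability forces $\theta<\pi$ by Veech's theorem. Developing each cylinder into $\C$ as an angular sector of angle $\theta$ and slicing it along a cross--cut saddle connection from $p_1$ to $p_2$ presents $C_a$ (resp.\ $C_b$) as a trapezoid $Q_a$ (resp.\ $Q_b$): two of its sides lie on the bounding rays and project to the boundary saddle connections $l_1,l_2$, while the other two are the cross--cut and its image under the multiplier, hence parallel. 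Each trapezoid is convex. Placing $Q_a$ and $Q_b$ side by side via the developing map along their common edge $l_2$ (across which there is no rescaling) yields a hexagon, which by Lemma \ref{polygon} is a genuine non--degenerate polygon; this is the canonical model and carries Pattern $1$.

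Next I would apply the cut--and--paste move of the text — slice off the triangle cut out by one identification and reglue it along the partner side — to convert this Pattern $1$ hexagon into a Pattern $2$ hexagon. At this stage convexity reduces to a single numerical condition: the interior angles sitting at the copies of a given singular point sum to its cone angle $2\pi$, so the six angles total $2\pi+2\pi=4\pi$, which is exactly the angle sum of a hexagon; since the polygon is already known to be non--degenerate, it is convex if and only if each of the six interior angles is strictly less than $\pi$.

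To force each angle below $\pi$ I would use not an arbitrary cross--cut but the near--perpendicular saddle connections $s_1,s_2$ of the canonical construction — the ones making the smallest non--negative angle with the perpendicular segments $v,v'$ — which confine the four angles of each trapezoid to a range controlled by $\theta<\pi$, and then track how these angles recombine under the triangle move. The main obstacle is exactly this verification: three positive numbers summing to $2\pi$ need not each be below $\pi$, so convexity is a genuine geometric statement rather than a formality. The delicate vertices are the two where $C_a$ and $C_b$ meet along $l_2$, where a strong fold between the trapezoids could create a reflex angle, together with the vertices altered by the triangle move; controlling them is where the quantitative input enters — the near--perpendicular choice bounds the trapezoid angles, $\theta<\pi$ bounds the fold, and the relation that the angles at each singular point sum to $2\pi$ closes the argument, the borderline degenerate configurations being ruled out by the same continuity considerations underlying the continuity of $\Theta$.
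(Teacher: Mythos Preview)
Your overall strategy---build a Pattern $1$ hexagon from the two trapezoids and then perform one cut--and--paste to Pattern $2$---is reasonable, but the proof as written has a genuine gap at exactly the point you flag as ``the main obstacle'': you never actually verify that the six interior angles of the resulting Pattern $2$ hexagon are all below $\pi$. Saying that the near--perpendicular choice ``bounds the trapezoid angles'' and that ``$\theta<\pi$ bounds the fold'' is not a proof; the canonical cross--cuts $s_1,s_2$ are only required to make a \emph{non-negative} angle with the perpendiculars $v,v'$, and that angle can be anywhere up to roughly the cylinder angle, so there is no automatic small--angle control. In particular it is not clear that a \emph{single} cut--and--paste from the canonical Pattern $1$ model always lands on a convex Pattern $2$ hexagon; you would need an explicit case analysis or inequality, and you have not supplied one. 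The closing appeal to ``continuity considerations'' does nothing here: continuity of $\Theta$ tells you cylinders persist under perturbation, not that a specific hexagon is convex.

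The paper's argument is different in structure and avoids this difficulty. It starts from an \emph{arbitrary} Pattern $1$ model and iterates a cut--and--paste within Pattern $1$, each step chipping away at the unique reflex vertex (one of the two vertices over $p_2$, whose angles sum to $2\pi$) until that angle drops below $\pi$; only then does it perform the final move to Pattern $2$. The clean finishing observation---which you are missing---is a symmetry specific to Pattern $2$: because opposite sides are parallel, \emph{opposite interior angles are equal}, so it suffices to check that three consecutive angles are below $\pi$, and the iteration has arranged precisely that. If you want to salvage your direct approach you would need either to prove an explicit angle bound for the canonical model or, more simply, to import this opposite--angle symmetry and reduce to three angles; as it stands the argument is incomplete.
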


\begin{proof}
Start with any polygonal model with Pattern $1$. First we point out that a hexagonal model with Pattern $1$ for a dilation torus can hardly ever be convex. Indeed the two angles at vertices projecting onto $p_2$ (see Figure \ref{convex}) add up to $2\pi$ and therefore the only way it can be convex is if those angles are both equal to $\pi$. 

\vspace{2mm} \noindent We describe an algorithm where each step consists in cutting a triangle from the hexagon and gluing it elsewhere and which produces in finitely many step the required convex model. 

\begin{figure}[!h]

\psfrag{p1}[][][0.8]{$p_1$}
\psfrag{p2}[][][0.8]{$p_2$}
\psfrag{T}[][][0.8]{$T$}
  \centering
  \includegraphics[scale=0.4]{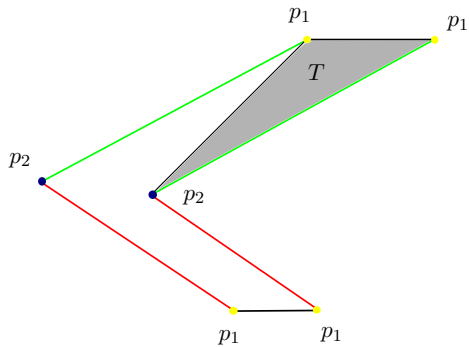}
  \caption{A hexagonal model with Pattern $1$.}
  \label{convex}
\end{figure}

\noindent The Figure \ref{convex} above displays a hexagon $H$  for which vertices of same color (blue or yellow) project onto the same singular point and sides of same color are parallel and identified. Form a triangle $T$ whose vertices are the blue vertex at which the interior angle is greater than $\pi$ and two consecutive yellow vertices (there are essentially two choices for such a triangle). Without loss of generality suppose that one of the green sides of $H$ is a side of $T$. We cut $T$ and are left with a pentagon. 
\vspace{2mm} \noindent At this point it can be that the angle at the blue vertex is still larger than $\pi$. In that case we glue $T$ to the other green side and we get a new polygonal model for our torus with Pattern $1$.  We repeat this operation until this angle is less than $\pi$. When we have achieved this, we simply glue the triangle $T$ to the other black side. We get a new polygonal model with Pattern $2$ and we claim that this one is convex. Indeed, the construction ensures that three consecutive angles are less than $\pi$. Because identified sides are parallel, opposites angles for a hexagonal model with gluing pattern $2$ are equal hence every interior angle is less than $\pi$ thus this hexagon is convex.

\end{proof}

\noindent We now exploit the existence of such convex hexagonal models to prove that we can find a hexagonal model for which at least two pairs of identified sides are not too large. In the sequel we consider convex hexagonal models with Pattern $2$ (see Section \ref{subsecpolmod}) \textbf{of area} $1$. We will also denote by $l_1, l_2$ and $l_3$ the respective lengths of the shortest of each pair of sides identified.

\begin{lemma}
\label{goodconvex}
There exists a constant $K_{\lambda}$ such that every dilation torus in $\MT$ can be represented by a convex hexagon of area $1$ with Pattern $2$ and such that $l_1$ and $l_2$ are less than $K_{\lambda}$.
\end{lemma}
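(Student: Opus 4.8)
The plan is to start from a convex hexagonal model with Pattern $2$ of area $1$, whose existence is guaranteed by Lemma~\ref{convexexist} after rescaling the plane, and to extract from its combinatorics and from the linear holonomy an area estimate forcing two of the three pairs of sides to be short. I would write the edge vectors of such a hexagon, in cyclic order, as $e_1, e_2, e_3, e_4, e_5, e_6$; since opposite sides are parallel and identified by a dilation, $e_{i+3} = -t_i e_i$ for multipliers $t_i > 0$, and $l_i = \min(|e_i|, |e_{i+3}|)$. Convexity means the directions of $e_1, e_2, e_3$ are strictly increasing inside a half-plane, and closing up the boundary gives the relation $(1-t_1)e_1 + (1-t_2)e_2 + (1-t_3)e_3 = 0$. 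Computing the linear holonomy around $p_1$ along the hexagon identifications yields a relation of the form $t_1^{\pm 1} t_2^{\pm 1} t_3^{\pm 1} = \lambda$; together with the previous lemma confining every cylinder multiplier to $]1, \lambda[$, this keeps each $t_i$ in a range depending only on $\lambda$, and (since $\lambda \neq 1$, and since two vanishing factors $1 - t_i$ would force the third edge vector to vanish) shows that at most one of the $t_i$ can equal $1$.

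Next I would turn ``area $= 1$'' into a usable inequality. Expanding the shoelace area of the hexagon in terms of $e_1, e_2, e_3$ and the $t_i$ produces a positive expression of the shape $\mathrm{Area} = \sum_{1 \le i < j \le 3} a_{ij}(t)\,|e_i \wedge e_j|$, where each coefficient $a_{ij}(t)$ is a fixed polynomial in the $t_i$ and is therefore bounded above and below by constants depending only on $\lambda$. Since $|e_i \wedge e_j| = l_i l_j \sin\angle(e_i, e_j)$ up to the bounded factors coming from the multipliers, this reads $\sum_{i<j} l_i l_j \sin\angle(e_i, e_j) \asymp_\lambda 1$. In particular, if two of the lengths, say $l_1$ and $l_2$, were both larger than a threshold $K$, the corresponding term would force $\sin\angle(e_1, e_2) \lesssim_\lambda K^{-2}$, i.e. the two directions would be nearly parallel.

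The final step is a reduction argument. Among all convex Pattern~$2$ models of area $1$ I would select one minimising, say, $l_1 + l_2 + l_3$, and show the minimiser has at most one large side. The mechanism is that a pair of long, nearly-parallel sides can be shortened by the triangle cut-and-paste move used in Lemma~\ref{convexexist}, which trades such a pair for a genuinely shorter one while preserving convexity and area; this is the dilation-surface analogue of lattice basis reduction, the candidate ``bases'' being cycled by the moves. What makes the output bounded \emph{by a constant depending only on $\lambda$} — rather than merely bounded in product, as successive minima would give in the flat case — is precisely that $\lambda > 1$: the holonomy relation $t_1^{\pm 1} t_2^{\pm 1} t_3^{\pm 1} = \lambda$ prevents all three directions from collapsing into a single thin strip, so the reduction cannot escape to infinity in two independent directions. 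Relabelling then gives $l_1, l_2 \le K_\lambda$. I expect the main obstacle to be exactly this point: proving quantitatively that near-parallel long pairs are always removable and that the reduction terminates at a genuinely convex model, while tracking how $K_\lambda$ must blow up as $\lambda \to 1^+$, where the flat torus and its unbounded reduction theory reappear. The cylinder decomposition of Section~\ref{subsecdecomposition} provides the guiding intuition: $T$ splits into two cylinders of multipliers $\rho_a \le \rho_b$ with $\rho_a \rho_b = \lambda$, so $\rho_b \ge \sqrt{\lambda} > 1$ always yields one genuine dilation cylinder of bounded geometry, and only the possibly-flat cylinder $C_a$ can account for the single long direction.
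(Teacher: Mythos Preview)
Your preliminary setup is correct and matches the paper's: the ratio $t_i$ between identified sides lies in $[\lambda^{-1},\lambda]$ because each pair of opposite sides in a convex Pattern~2 hexagon bounds an embedded cylinder (this is exactly the paper's first remark), and the shoelace expansion $\mathrm{Area}=\sum a_{ij}(t)\,|e_i\wedge e_j|$ with $a_{ij}$ bounded in terms of $\lambda$ is valid.

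The genuine gap is in your reduction step. Minimising $l_1+l_2+l_3$ over all convex Pattern~2 models of the given torus is not obviously well-posed: the set of such models is a countable orbit under cut-and-paste, and you give no argument that the infimum is attained or that your iteration terminates. You yourself flag this (``I expect the main obstacle to be exactly this point''), and indeed it is. Moreover, the move you invoke --- the triangle cut from Lemma~\ref{convexexist} --- is designed to pass from Pattern~1 to Pattern~2, not to shorten a nearly-parallel long pair inside a Pattern~2 hexagon; it is not clear it accomplishes that. Finally, the holonomy relation $t_1^{\pm1}t_2^{\pm1}t_3^{\pm1}=\lambda$ is not what prevents escape: the area constraint already does the work, and the paper makes no use of this relation here.

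The paper sidesteps minimisation entirely with a different, finite procedure. The move is to cut $H$ along the \emph{diagonal} $l$ joining a pair of opposite vertices, obtaining two quadrilaterals $Q_1,Q_2$, and then reglue them along one of the other identified pairs; the paper checks (its second remark) that one of the two possible regluings is again a convex Pattern~2 hexagon, now with $l$ as one of its side-pairs. The point is that $l$ borders a quadrilateral of area at most $1$ whose other three sides are among the original $e_i$; if $l_1,l_2,l_3$ are all large, this forces $|l|$ to be small. So one diagonal cut replaces one of the three long pairs by a short one. A second cut, along the diagonal through the two vertices where the remaining two long pairs meet, reduces to a single long pair. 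Two explicit steps, no compactness needed.
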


\begin{proof}
Start with a convex polygonal model $H$ given by Lemma \ref{convexexist}. We first need to make two preliminary remarks.

\begin{enumerate}

\item By virtue of $H$ being convex, the quadrilateral formed by joining the ends of two sides identified by the gluing is contained within $H$ hence projects onto a cylinder in the associated torus. Such a cylinder has dilation factor at most $\lambda$ hence the ratio between side glued together is at most $\lambda$. 

\item Consider any pair of  opposite vertices $q$ anr $r$. The segment $l$ joining these two cuts $H$ into two quadrilaterals $Q_1$ and $Q_2$. Call $a$ and $b$ the two sides of $Q_1$ which are adjacent to $l$. Both of them are identified to $a'$ and $b'$ in $Q_2$. We claim that gluing $Q_1$ and $Q_2$ along either $a$ and $a'$ or $b$ and $b'$ yields a convex hexagonal model with gluing pattern $2$. The fact that the gluing pattern is number $2$ is straightforward. Let us denote by $\alpha$ the angle of $H$ at $r$ and $\beta$ the angle at $q$. The segment $l$ cuts $\alpha$ into two angles $\alpha_1$ and $\alpha_2$ and $\beta$ into two angles $\beta_1$ and $\beta_2$. Up to relabelling we can assume one of the new angle induced by the two suggested gluing are respectively $\alpha_1 + \beta_1$ and $\alpha_2 + \beta_2$. Since $\alpha + \beta \leq 2\pi$ at least one of these two is less than $\pi$. We choose the gluing realising this and for this one the new hexagon is convex since it has three consecutive angles less than $\pi$ hence all of them by virtue of the symmetry in the angles discussed in the proof of Lemma \ref{convexexist}.

\end{enumerate}

\noindent We give a proof of qualitative nature in order not to cloud the idea with too many quantifiers. We invite the interested reader to fill in the gaps as an exercise. Assume $l_1$, $l_2$ and $l_3$ are very large. Pick any segment $l$ joining opposite vertices in $H$. It bounds a quadrilateral whose area is less than $1$ and whose three other sides are very long. It therefore must be small. We perform the gluing explained in the second remark above two get a new convex hexagon with only two segment which are very large (since the ratio between segments paired together is bounded, the rescaling to make this new hexagon of area one is bounded as well and this does not affect the fact that $H$ has a pair of identified sides of moderate length). 

\noindent Reiterate this procedure using the line joining the vertices at which the remaining two long pairs of sides meet to get the desired polygonal model.

\end{proof}

\noindent We end this section by proving the following Lemma which predicts the existence of cylinders of not too small affine modulus.

\begin{lemma}
\label{largemodulus}
There exists a constant $M_{\lambda} > 0$ such that every dilation torus $T$  in $\MT$ such that $\Theta(T) < \frac{\pi}{2}$ contains a cylinder of modulus greater than $M_{\lambda}$.
\end{lemma}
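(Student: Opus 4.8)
The plan is to combine the convex model of Lemma~\ref{goodconvex} with the elementary fact that, in a cylinder decomposition, whichever cylinder carries at least half of the area cannot be too thin once its boundary is short.

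First I would fix the convex hexagon $H$ of area $1$ with Pattern~$2$ given by Lemma~\ref{goodconvex}, whose two shortest identified sides satisfy $l_1,l_2\le K_\lambda$; since the gluing ratio of a pair of identified sides is at most $\lambda$ (the quadrilateral they span is a cylinder, of multiplier $<\lambda$), all four sides of the first two pairs have length at most $\lambda K_\lambda$. Let $s_3,s_6$ denote the third, possibly very long, pair. The quadrilateral bounded by $s_3$ and $s_6$ projects to a cylinder $\mathcal C$, and its complement in $H$ glues up to a second cylinder $\mathcal C'$, so that the two segments $\sigma,\sigma'$ joining the endpoints of $s_3$ to those of $s_6$ are disjoint closed saddle connections (loops at $p_2$ and $p_1$, since opposite hexagon vertices of the same colour are identified) cutting $T$ into $\{\mathcal C,\mathcal C'\}$. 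The crucial geometric observation is that $\sigma$ and $\sigma'$ short-cut the boundary paths $s_4,s_5$ and $s_1,s_2$, so their lengths are at most $|s_4|+|s_5|$ and $|s_1|+|s_2|$, hence at most $2\lambda K_\lambda$ — a bound that is \emph{independent of how elongated $H$ is}.

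Then I would estimate the modulus of whichever of $\mathcal C,\mathcal C'$ carries area $\ge\tfrac12$. Since $\Theta(T)<\tfrac{\pi}{2}$, every embedded cylinder has angle $<\tfrac{\pi}{2}$ and multiplier in $(1,\lambda)$, so the modulus is defined. Developing such a cylinder on the angular sector model (angle $\theta$, multiplier $\rho$) and writing $b$ for the common length of its two boundary geodesics, one computes
\[
\mathrm{Area}=\frac{\theta\,(\rho+1)}{2(\rho-1)}\,b^{2},\qquad \mathrm{modulus}=\frac{\tan\theta}{\rho-1}\ \ge\ \frac{\theta}{\rho-1},
\]
whence $\mathrm{modulus}\ge \frac{2}{\rho+1}\cdot\frac{\mathrm{Area}}{b^{2}}\ge \frac{2}{\lambda+1}\cdot\frac{\mathrm{Area}}{b^{2}}$. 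Feeding in $\mathrm{Area}\ge\tfrac12$ and $b\le 2\lambda K_\lambda$ produces a lower bound of the form $M_\lambda>0$, depending only on $\lambda$ through $K_\lambda$, which is exactly the assertion of the Lemma.

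The step I expect to be the main obstacle is the change of normalisation hidden in the displayed formula: on a dilation surface lengths and areas are defined only up to the (non-trivial) dilation holonomy, so the boundary length $b$ of the \emph{intrinsic} sector model and the lengths of $\sigma,\sigma'$ measured inside the polygon $H$ differ by scale. Making the estimate rigorous therefore amounts to evaluating the scale-invariant ratio $\mathrm{Area}/b^{2}$ in a single consistent chart — e.g.\ checking that $b$ is comparable to the $H$-lengths of $\sigma,\sigma'$, so that the uniform bound $b\le 2\lambda K_\lambda$ survives — and, on the combinatorial side, confirming that the complement $\mathcal C'$ really is one cylinder admitting $\sigma,\sigma'$ as its boundary. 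Both points are routine but require care; everything else reduces to the soft area-versus-short-boundary dichotomy above, and one should note that this is a genuine converse to Lemma~\ref{smallangle}.
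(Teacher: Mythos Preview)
Your argument is correct and genuinely different from the paper's. The paper splits into two cases: if $l_3$ is large the quadrilateral on the long pair is visibly ``long and thin'' and hence of large modulus, while if $l_3$ is bounded the paper simply invokes compactness of the set of normalised hexagons with all six sides bounded. You bypass this dichotomy by observing that the two diagonals $\sigma,\sigma'$ joining the short pairs are \emph{always} bounded (by $2\lambda K_\lambda$), so that the scale-invariant ratio $\mathrm{Area}/(|\sigma|\,|\sigma'|)$ for whichever cylinder carries half the area gives an explicit lower bound on the modulus via $\tfrac{\tan\theta}{\rho-1}=\tfrac{2\,\mathrm{Area}}{(\rho+1)\cos\theta\,|\sigma|\,|\sigma'|}$. (Your displayed formula should read $\sin\theta$ in place of $\theta$ and $b_1 b_2$ in place of $b^2$, but the inequalities go the right way.) Your flagged obstacle is real only for the complementary cylinder $\mathcal C'$: developing the two triangles into a single quadrilateral rescales one of them by the gluing ratio $c=|s_1|/|s_4|\in[\lambda^{-1},\lambda]$, so the developed area is $a_{123}+c^{2}a_{456}$ rather than $a_{123}+a_{456}$; since $c$ is bounded this still yields $\mathrm{Area}_{\rm dev}/(b_1 b_2)\gtrsim \lambda^{-1}(a_{123}+a_{456})/(|\sigma|\,|\sigma'|)$ and the estimate survives. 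What you gain over the paper is an explicit $M_\lambda$ and no appeal to compactness; what the paper's argument gains is brevity.
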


\begin{proof}

Pick a polygonal model for $T$ given by Lemma \ref{goodconvex}. If $l_3$ the possibly very large side is indeed very large then the cylinder one gets when projecting the quadrilateral defined by the long sides identified has large modulus. Indeed this cylinder is going to be very long and thin because $H$ has fixed area $1$. Since we have assumed that $\Theta(T) < \frac{\pi}{2}$ being long thin implies that the modulus is large. Precisely, there exists $K>0$ such that if $l_3$ is larger than $K$ then it contains a cylinder of modulus larger than a certain constant $M_1$. Otherwise the length $l_1$, $l_2$ and $l_3$ are bounded by a simple compactness argument we get the existence of a cylinder of modulus larger than $M_2$. Then $M_{\lambda} = \min\{M_1, M_2 \}$ works.

\end{proof}

\section{Volume of the cusp}
\label{volume}
\noindent We discuss in this section the geometry of $\MT$. In particular we show that regions of $\MT$ where the angle function $\Theta$ is bounded from above have finite volume. This section is dedicated to the proof of the following result:

\begin{theorem}
\label{finitevolume}
The volume of the 'cusp'
$$\mathcal{C} =  \big\{ T \in \MT \ \big| \ \Theta(T) \leq \frac{\pi}{4}  \big\}$$ is finite.
\end{theorem}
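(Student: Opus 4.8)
The plan is to compute the $\mu$-volume of $\mathcal{C}$ directly in geometric coordinates coming from the cylinder decompositions, controlling the single non-compact direction by the thinness of a cylinder exactly as the $y\to\infty$ direction is controlled in the standard cusp of $\mathrm{SL}(2,\mathbb{R})/\mathrm{SL}(2,\mathbb{Z})$. Since $\frac{\pi}{4}<\frac{\pi}{2}$, Lemma \ref{largemodulus} applies on all of $\mathcal{C}$: every $T$ with $\Theta(T)\le\frac{\pi}{4}$ carries a cylinder of modulus $>M_\lambda$, and this cylinder participates in a cylinder decomposition. By Proposition \ref{finitecylinders} a surface has only finitely many cylinders of modulus bounded below, and by the bound placing every cylinder multiplier in $]1,\lambda[$ together with the $\mathrm{MCG}(1,2)$-quotient defining $\MT$, the relevant decompositions fall into finitely many combinatorial types. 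It therefore suffices to bound, for one such type, the volume of the set of $T\in\MT$ admitting a decomposition whose distinguished cylinder $C_a$ has modulus $>M_\lambda$; overcounting between types and between the at most two thin cylinders of a fixed direction (Proposition \ref{numberofcylinders}) only multiplies the final bound by a constant.

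On such a region I would coordinatise by the convex area-$1$ hexagonal models of Lemma \ref{goodconvex}, in which $l_1,l_2\le K_\lambda$ are bounded and the thin cylinder is the one bounded by the long pair of sides of length $l_3$. These furnish period coordinates (the three edge vectors of the hexagon together with the three logarithmic gluing factors, subject to the closing-up relation, the normalisation fixing $\lambda$, and the residual discrete symmetries), in which $\mu=(\text{Haar on }\mathrm{SL}(2,\mathbb{R}))\otimes \mathrm{d}\log\rho_a\wedge\mathrm{d}\log\rho_b$, being in the Lebesgue class and invariant, is a smooth positive density times Lebesgue measure, bounded above and below on the compact part of the coordinates. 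The key geometric translation, supplied by Lemmas \ref{smallangle} and \ref{largemodulus}, is that $\Theta(T)\le\frac{\pi}{4}$ is equivalent, up to a subset of bounded volume, to $l_3\ge K$ for some $K=K_\lambda$.

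The heart of the argument is the tail estimate. The residual mapping class group contains the Dehn twist about the core of $C_a$; this acts as a unipotent (parabolic) transformation and confines the twist coordinate of $C_a$ to a bounded fundamental interval — the exact analogue of the identification $x\sim x+1$ that compactifies the horocyclic direction in the modular cusp, and the feature that rescues finiteness, since in the transverse holonomy directions the multiplier constraint bounds only one coordinate while the other is a priori free. Once the twist is bounded, the area-$1$ normalisation forces the width of $C_a$ to be comparable to $1/l_3$, so the slice $\{l_3\in[s,s+ds]\}\cap\mathcal{C}$ has $\mu$-measure at most $C\,s^{-2}\,ds$ (an integrable inverse-power tail, of the same nature as the $dy/y^2$ tail of the modular cusp), all remaining coordinates ($l_1,l_2$, the directional parameter, and the multiplier $\rho_a\in\,]1,\lambda[$) ranging over sets of finite measure. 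Integrating gives $\mathrm{vol}_\mu(\mathcal{C})\le C'\int_{K}^{\infty}s^{-2}\,ds<\infty$.

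The main obstacle is precisely this last step: honestly computing the density of $\mu$ in the hexagonal period coordinates and extracting the correct integrable exponent, while verifying that the Dehn twist about the thin cylinder is the only source of a parabolic identification and that it does bound the single otherwise-unbounded twist direction. A secondary difficulty is the bookkeeping of the first step, namely checking that the finitely many combinatorial types and the at-most-two thin cylinders per direction yield only bounded-multiplicity overcounting, so that controlling one model region controls all of $\mathcal{C}$.
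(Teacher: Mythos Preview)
Your plan is essentially the paper's: exhibit $\mathcal{C}$ inside the set of tori carrying a cylinder of modulus $\ge M_\lambda$ (Lemma \ref{largemodulus}), parametrise by the canonical hexagonal model attached to that cylinder, observe that the holonomy parameters $\rho$ range over a compact box in $[\lambda^{-1},\lambda]^2$, that the Dehn twist about the thin cylinder confines the twist coordinate to a fundamental interval whose length shrinks like the reciprocal of the modulus, and integrate the resulting inverse-power tail. The one substantive difference is in how the density of $\mu$ is handled---the step you flag as the main obstacle. The paper does \emph{not} compute $\mu$ in hexagonal period coordinates; instead it exploits that in this stratum the isoholonomic leaves coincide with the $\mathrm{SL}(2,\mathbb{R})$-orbits, so for each $\rho$ one fixes a basepoint $H_{\rho,M_\lambda}$ and parametrises the leaf by the acting matrix in $KNA$ coordinates $(\theta,t,\nu)$, where the leafwise measure is literally Haar, $\tfrac{d\nu}{\nu}\,dt\,d\theta$. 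The modulus then scales as $\nu$, the Dehn twist bounds $t$ by $K/\nu$, and the integral $\int_1^\infty \tfrac{1}{\nu}\cdot\tfrac{K}{\nu}\,d\nu$ is immediate---no Jacobian computation at all. Your ``finitely many combinatorial types'' worry also dissolves: the canonical model of Section \ref{subsecdecomposition} is a single global chart for this purpose, so no bookkeeping is needed.
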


\subsection{Haar measure of $\SL$}

We first recall an explicit description of the Haar measure of $\SL$. We denote by $N$ the subgroup of $\SL$ of unipotent lower triangular matrices, $A$ the subgroup of diagonal matrices and $K$ the subgroup of orthogonal matrices.

\begin{lemma}

Every element  $ m \in \SL$ can be decomposed in a unique way as a product of the form

$$ m =  k \cdot n \cdot a $$ where $n = \begin{pmatrix}
1 & 0 \\
t & 1 
\end{pmatrix} \in N$, $a =  \begin{pmatrix}
\nu & 0 \\
0 & \nu^{-1} 
\end{pmatrix} \in A$ and $k =  \begin{pmatrix}
\cos \theta & -\sin \theta \\
\sin \theta & \cos \theta 
\end{pmatrix} \in K$.

\end{lemma}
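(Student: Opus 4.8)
The plan is to recognise this as a variant of the Gram--Schmidt (or $QR$) factorisation. The first observation is that the product $n\cdot a$ sweeps out exactly the lower-triangular matrices in $\SL$ with positive diagonal: indeed
$$ n \cdot a = \begin{pmatrix} 1 & 0 \\ t & 1 \end{pmatrix}\begin{pmatrix} \nu & 0 \\ 0 & \nu^{-1}\end{pmatrix} = \begin{pmatrix} \nu & 0 \\ t\nu & \nu^{-1}\end{pmatrix}, $$
and conversely any lower-triangular $L \in \SL$ with $L_{11} > 0$ arises this way with $\nu = L_{11}$ and $t = L_{21}/L_{11}$, uniquely. Hence the statement is equivalent to the assertion that every $m \in \SL$ can be written uniquely as $m = k \cdot L$ with $k \in K$ and $L$ lower-triangular with positive diagonal entries.

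For existence, I would write $m = \begin{pmatrix} a & b \\ c & d\end{pmatrix}$ and look for $\theta$ making $k^{-1}m$ lower-triangular, i.e.\ killing its upper-right entry. Since $k^{-1} = k^{T}$, that entry equals $b\cos\theta + d\sin\theta$, and as $(b,d) \neq (0,0)$ (otherwise $\det m = 0$) one may choose $(\cos\theta, \sin\theta)$ orthogonal to $(b,d)$; there are two such choices, differing by $\theta \mapsto \theta + \pi$. The resulting $L = k^{-1}m$ is lower-triangular with $\det L = 1$, so its two diagonal entries have product $1$ and hence the same sign; replacing $\theta$ by $\theta + \pi$ flips both signs, so exactly one of the two choices makes both diagonal entries positive. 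This produces the desired factorisation and simultaneously pins down $\theta$ modulo $2\pi$, that is, pins down $k$.

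For uniqueness, suppose $k_1 L_1 = k_2 L_2$ are two such factorisations. Then $k_2^{-1} k_1 = L_2 L_1^{-1}$ lies in $K$ and is simultaneously lower-triangular with positive diagonal. A rotation matrix whose upper-right entry $-\sin\theta$ vanishes is $\pm I$, and positivity of the diagonal forces it to be $I$; hence $k_1 = k_2$ and $L_1 = L_2$. Combined with the unique recovery of $(t,\nu)$ from $L$ noted above, this gives uniqueness of the triple $(k,n,a)$.

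There is essentially no obstacle here beyond bookkeeping: the only point requiring care is the sign normalisation guaranteeing positivity of the diagonal of $L$ (equivalently $\nu > 0$), which is precisely what rigidifies the rotation angle and yields uniqueness. One could alternatively invoke the Iwasawa $KAN$ decomposition of $\SL$, of which this is a reordered, lower-triangular avatar, but the direct $QR$ argument above is self-contained.
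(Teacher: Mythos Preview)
Your proof is correct: the reduction to a $QR$-type factorisation $m = kL$ with $L$ lower-triangular of positive diagonal is exactly the right move, and both the existence argument (choosing $\theta$ to annihilate the upper-right entry of $k^{-1}m$, then fixing the sign) and the uniqueness argument (an orthogonal lower-triangular matrix with positive diagonal is the identity) are clean and complete. The paper itself does not supply a proof---it simply declares the lemma classical and leaves it to the reader---so there is nothing to compare against; your self-contained Gram--Schmidt argument is entirely appropriate here.
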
 \noindent  We leave the proof of this classical Lemma to the reader. This gives us local coordinates $(t, \nu, \theta)$ on $\SL$. In these coordinates, the Haar measure of $\SL$ is up to multiplication to a positive constant

$$dm_{\SL} =  \frac{d\nu}{\nu}dt d\theta.$$ We will make extensive use of this fact in the sequel.

\subsection{Computation of the volume}

In this section we consider hexagonal models with gluing pattern $1$. Fix $1 < \rho_a < \lambda$, $\lambda^{-1} < \rho_b <1$ and $ \rho_b < \rho_c < \rho_a$. We use the notation $\rho = (\rho_a, \rho_b, \rho_c)$. There is a unique hexagonal model $H_{\rho,M}$ with gluing pattern $1$ such that

\begin{itemize}
\item the associated cylinders $C_a$ and $C_b$ have multipliers $\rho_a$ and $\rho_b$ respectively;
\item the modulus of $C_a$ is exactly $M$;
\item the unique pair of sides which project onto a closed saddle connection are glued along a dilation of factor $\rho_C$.
\end{itemize}

\noindent The idea that is going to drive the computation to come is that every canonical hexagonal model can be obtained from a $H_{\rho,M}$ after applying a matrix in $\mathrm{SL}(2,\R)$ of the form $\begin{pmatrix}
\mu & 0 \\
t & \mu^{-1} 
\end{pmatrix}$ with $t$. 

\vspace{3mm}

\noindent Consider the $1$-parameter (semi-simple) subgroup of $\mathrm{SL}(2,R)$ which preserves the two directions of the boundary components of the cylinder decomposition corresponding to $H_{\rho}$. Without loss of generality, we can suppose that the direction of the pair of sides glued along the dilation of factor  $\rho_C$ is the vertical one. In this case this one parameter subgroup is 

$$  \big\{ q_t = \begin{pmatrix}
\sqrt{\rho_a}e^t & 0 \\
 \frac{\rho_ae^{2t} - 1}{\sqrt{\rho_a}e^t  \tan \theta_a} & \sqrt[-2]{\rho_a}e^{-t}

\end{pmatrix} \ | \ t \in \mathbb{R} \big\} $$ where $\theta_a$ is the angle of $C_a$.  The image of $C_a$ under the action of $q_1$ is represented on Figure \ref{twist} below: plain lines in green and blue are mapped by $q_1$ to the dashed ones of same colour. Formally $q_1$ realises a Dehn twist in the only essential simple closed curve of $C_a$.  

\begin{figure}[!h]

\psfrag{1}[][][0.8]{$1$}
\psfrag{r}[][][0.8]{$\rho$}

\centering
\includegraphics[scale=0.31]{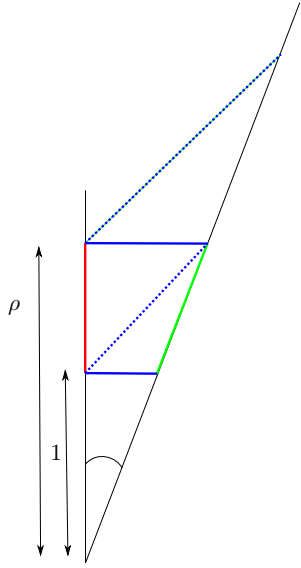}
\caption{The image of a polygonal model for $C_a$ by the action of $q_1$.}
\label{twist}
\end{figure}


\noindent Building on these remarks we notice that $\{ q_t \cdot H_{\rho} \ | \ t \in [0,1] \}$ contains all canonical hexagonal models with gluing pattern $1$, gluing factors $\rho$ and whose cylinder $C_a$ has modulus $M$. 

\noindent Notice also that $\begin{pmatrix}
e^{\frac{t}{2}} & 0 \\
0 & e^{-\frac{t}{2}}
\end{pmatrix} \cdot H_{\rho,M} = H_{\rho, e^tM}$.  As a consequence of these two facts

$$  \big\{ \begin{pmatrix}
\sqrt{\rho_a}e^t & 0 \\
 \frac{\rho_ae^{2t} - 1}{\sqrt{\rho_a}e^t \nu(\rho_a-1)M_{\lambda}} & \sqrt[-2]{\rho_a}e^{-t}

\end{pmatrix}  \begin{pmatrix} \sqrt{\nu} & 0 \\ 
0 & \sqrt{\nu}^{-1} 
\end{pmatrix} \cdot H_{\rho, M_{\lambda}} \ | \ t\in [0,1] \ \text{and} \  \nu \in [1, + \infty[ \big\} $$ contains every possible canonical hexagonal model with gluing factors $\rho$ and whose cylinder $C_a$ has modulus less than $M_{\lambda}$.

\noindent In particular there exists a constant $K>0$ depending only on $\lambda$ such that 

\begin{equation}
\label{domain}
\big\{ \begin{pmatrix}
1 & 0\\
t & 1

\end{pmatrix}  \begin{pmatrix} \sqrt{\nu} & 0 \\ 
0 & \sqrt{\nu}^{-1} 
\end{pmatrix} \cdot H_{\rho, M_{\lambda}} \ | \ \nu \in [1, + \infty[ \  \ \text{and} \  t\leq \frac{K}{\nu}  \big\} \end{equation} also contains every possible canonical hexagonal model with gluing factors $\rho$ and whose cylinder $C_a$ has modulus less than $M_{\lambda}$. 

\vspace{2mm}

\noindent We recall that every dilation tori whose angle is less than $\frac{\pi}{4}$ can be represented by an canonical hexagonal model. For such a canonical model we have seen (see Section \ref{subsecdecomposition}) that $1 \leq \rho_a \leq \lambda$ and $\lambda^{-1}\leq \rho_c \leq \lambda$. Recall that $\mathcal{C }=  \big\{ T \in \MT \ \big| \ \Theta(T) \leq \frac{\pi}{4}  \big\}$. Let us denote by $\mathcal{H}_{\rho}$ the set of hexagonal models whose cylinder $C_a$ has modulus larger than $M_{\lambda}$. Every set $\mathcal{H}_{\rho}$ naturally projects to $\MT$. Every such projection is a local diffeomorphism onto a leaf of the isoholonomic foliation. Moreover, each $\mathcal{H}_{\rho}$ naturally identifies with an open subset of $\mathrm{SL}(2, \mathbb{R})$ to give local coordinates 

\begin{equation}
\label{coordinates}
 \begin{array}{ccc}
[\lambda^{-1}, \lambda]\times [\lambda^{-1}, \lambda] \times \SL  & \longrightarrow & \MT \\
(\rho_a, \rho_c, A) &  \longmapsto  & A\cdot H_{\rho}
\end{array} \end{equation}  where $\rho = (\rho_a, \rho_a\lambda^{-1}, \rho_c)$ and where we identified $A\cdot H_{\rho}$ with the torus obtained after gluing the corresponding hexagonal model. The volume form $d\mu$ writes in these coordinates 

$$ d\mu = d\log\rho_a \wedge d\log\rho_c \wedge dm_{\SL} $$ where $dm_{\SL}$ is the Haar measure on $\SL$. We have that 

$$ \mathrm{vol}(C) \leq \int_{\text{modulus} \leq M_{\lambda}}{d\mu} $$ By \ref{domain} and \ref{coordinates} we get that

$$ \mathrm{vol}(C) \leq \int_{\lambda^{-1}}^{\lambda}{\int_{\lambda^{-1}}^{\lambda}{\int_{\nu \geq1 , \ t\leq \frac{K}{\nu}, }{dm_{\SL}(\begin{pmatrix}
\cos \theta & -\sin \theta\\
\sin \theta & \cos \theta
\end{pmatrix} \begin{pmatrix}
1 & 0\\
t & 1
\end{pmatrix}  \begin{pmatrix} \sqrt{\nu} & 0 \\ 
0 & \sqrt{\nu}^{-1} 
\end{pmatrix})}d\log\rho_a}d\log\rho_c} $$ 

\noindent The Haar measure of $\SL$ in these coordinates is $\frac{d\nu}{\nu}dtd\theta$ and this integral becomes

$$ \mathrm{vol}(C) \leq \int_{\lambda^{-1}}^{\lambda}{\int_{\lambda^{-1}}^{\lambda}{\big(\int_{\nu \geq1 , \ t\leq \frac{K}{\nu^2}, \ \theta \in S^1}{\frac{d\nu}{\nu}dtd\theta}\big)d\log\rho_a}d\log\rho_c} $$ thus making the change of variable $s = \frac{\nu}{K}t$ we get

$$ \mathrm{vol}(C) \leq 4\pi\log(\lambda)K \int_1^{+\infty}{\frac{d\nu}{\nu^3}}. $$ This proves that the volume of $\mathcal{C}$ is finite and completes the proof of Theorem \ref{finitevolume}. 

\section{Dynamics of the Teichmüller flow}
\label{dynamics}

\noindent In this section we explain how to exploit the finiteness of the volume of the cusp proved in the previous section to show that the set $E \subset \MT$ of dilation tori whose vertical foliation is minimal has measure zero. The strategy we follow is the following: 

\begin{itemize}

\item we first prove that points in $E$ for which the function $\Theta$ does not tend to zero along the orbit of the Teichmüller flow are not density point in the intersection of $E$ with the set of directions of the associated dilation torus;

\item we then show that the set of points in $E$ for which  $\Theta$ tends to zero along the orbit of the Teichmüller flow has measure zero - this is a consequence of the finiteness of the volume of the cusp $\mathcal{C}$. 
\end{itemize} These two points together imply (not completely directly)

\begin{theorem}
\label{maintheorem}
The set of Morse-Smale points in $\MT$ has full measure. Equivalently, $E$ has measure zero.
\end{theorem}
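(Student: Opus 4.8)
The plan is to prove the equivalent assertion $\mu(E) = 0$, where $E \subset \MT$ is the set of tori whose vertical foliation is minimal. Observe first that $E$ is invariant under the Teichm\"uller flow, since a diagonal matrix preserves the leaves of the vertical foliation and only rescales its transverse structure. I would then exploit the local product structure of $\mu$: because the isoholonomic foliation coincides with the $\SL$-orbit foliation, $\mu$ disintegrates locally as a Haar measure along the $3$-dimensional orbits times the transverse $2$-form $d\log\rho_a\wedge d\log\rho_b$. Moreover, within a single orbit the membership of $A\cdot T$ in $E$ depends only on the direction $A^{-1}(\mathrm{vertical})$, so $E$ is saturated for the fibration of $\SL$ over the circle $S^1$ of directions. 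Hence, after a further application of Fubini, it suffices to prove that for $\mu$-almost every torus the set $E_{\mathrm{dir}}\subset S^1$ of its minimal directions has zero length.

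Following the two bullet points preceding the statement, I would split $E = E_0 \sqcup E_1$, where $E_0$ consists of those $T$ for which $\Theta(g_t T)\to 0$ along the forward Teichm\"uller orbit $g_t$, and $E_1$ is its complement in $E$. The set $E_0$ is controlled by Theorem \ref{finitevolume}. Writing $\mathcal{C}_\delta = \{\Theta \le \delta\} \subseteq \mathcal{C}$, each set $\hat F_\delta$ of tori whose forward orbit is eventually contained in $\mathcal{C}_\delta$ is an increasing union of flow-translates of the forward-invariant set $F_\delta := \{T : g_t T \in \mathcal{C}_\delta\ \forall\, t\ge 0\}$; by $\mu$-invariance of the flow all these translates have the same measure, so $\mu(\hat F_\delta) = \mu(F_\delta) \le \mu(\mathcal{C}_\delta)$. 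Since $\Theta > 0$ everywhere (every torus admits a cylinder decomposition) and $\mu(\mathcal{C})<\infty$, continuity from above gives $\mu(\mathcal{C}_\delta)\to 0$ as $\delta \to 0$. As $E_0 \subseteq \bigcap_\delta \hat F_\delta$, this forces $\mu(E_0) = 0$.

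The heart of the argument, and the step I expect to be the main obstacle, is the density criterion for $E_1$: I claim that if $\Theta(g_t T)\not\to 0$ then the vertical direction is not a Lebesgue density point of $E_{\mathrm{dir}}$. Choose $t_n\to\infty$ and $c>0$ with $\Theta(g_{t_n}T)\ge c$. By definition of $\Theta$ the renormalised surface $g_{t_n}T$ contains an embedded dilation cylinder of angle $\ge c$; by construction every direction in the angular cone of this cylinder has a closed leaf and is therefore Morse-Smale, hence non-minimal (Proposition \ref{classification}). This produces, in the direction circle of $g_{t_n}T$, an interval of non-minimal directions of angular size bounded below. Pulling this interval back through $g_{t_n}$, which acts on the circle of directions by a diffeomorphism contracting toward the vertical while preserving ratios of infinitesimal lengths at the vertical, yields at scale $e^{-2t_n}$ an interval of non-minimal directions of $T$ accumulating at the vertical and occupying a definite proportion of its neighbourhood. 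Along the sequence of scales $e^{-2t_n}$ the density of $E_{\mathrm{dir}}$ at the vertical therefore stays bounded away from $1$, so the vertical is not a density point.

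Finally I would assemble the pieces. By the Lebesgue density theorem applied fibrewise over $S^1$ and integrated against the transverse measure, $\mu$-almost every point of $E$ is a density point of $E_{\mathrm{dir}}$ at its vertical direction; by the criterion just described every such point lies in $E_0$, so $\mu(E \setminus E_0) = 0$; and $\mu(E_0) = 0$ by the finite-volume argument. Hence $\mu(E) = \mu(E\cap E_0) + \mu(E\setminus E_0) = 0$, which is the theorem. The delicate point throughout is the bounded-distortion estimate in the density criterion: one must verify that the cone of periodic directions coming from a cylinder of angle $\ge c$ subtends a uniformly positive proportion of a genuine neighbourhood of the vertical after renormalisation, uniformly in $n$, so that the relevant interval is neither crushed against the vertical nor pushed off toward the horizontal. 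This is precisely where the explicit control of cylinders from Section \ref{geometry}, in particular Lemma \ref{smallangle}, is needed.
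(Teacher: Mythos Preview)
Your proposal is correct and follows essentially the same route as the paper: split $E$ according to whether $\Theta(g_tT)\to 0$, kill the first part using finite volume of the cusp (Theorem~\ref{finitevolume}) and flow-invariance of $\mu$, and kill the second part via the Lebesgue density theorem applied fibrewise over the circle of directions. One small correction: the bounded-distortion estimate in the density criterion does not need Lemma~\ref{smallangle} or any of the cylinder geometry from Section~\ref{geometry}; the paper handles it purely by the convexity of the projective action of $g_t$ on $[0,\tfrac{\pi}{2}]$, which automatically ensures that the pulled-back cylinder interval occupies a proportion at least $\tfrac{2\epsilon}{\pi}$ of the interval between it and the vertical.
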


\noindent The equivalence between these two statement is ensured by Proposition \ref{classification}. 

\subsection{Density points in the set of exceptional directions}

Recall that if $A$ is a Lebesgue-measurable subset of
 $\R^n$, $x\in A$ is said to be a density point of $A$ if 
 
 $$ \lim_{r\rightarrow 0}{\frac{\mathrm{Leb}\big(A\cap B(x,r)\big)}{\mathrm{Leb}\big(B(x,r) \big)}} = 1.$$ This definition also makes sense for any measure on a  smooth manifold which is absolutely continuous with respect to the Lebesgue measure. We recall the following lemma

 \begin{lemma}[Lebesgue regularity Lemma]
Let $A$ be a Lebesgue-measurable subset of a smooth manifold $M$. If $A$ does not have measure zero, then almost every point in $A$ is a density point of $A$.
\end{lemma}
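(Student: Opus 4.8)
The plan is to deduce this from the classical Lebesgue density theorem on $\R^n$, which itself follows from the Lebesgue differentiation theorem applied to the indicator function $\mathbf{1}_A$. The only genuinely manifold-specific point is a reduction to Euclidean charts, after which the statement is entirely standard harmonic analysis.

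First I would reduce to the Euclidean setting. Being a density point is a purely local property, so it suffices to work inside a single smooth chart $\varphi : U \to \R^n$, and to cover $M$ by countably many such charts. The key observation is that this notion is invariant under $\varphi$: since the Jacobian $D\varphi$ is continuous and nondegenerate, near a fixed point $x_0$ the map $\varphi$ is well approximated by the invertible affine map $y \mapsto \varphi(x_0) + D\varphi(x_0)(y-x_0)$, so the images $\varphi(B(x,r))$ are, up to a vanishing relative error, ellipsoids whose eccentricity is controlled uniformly for $x$ near $x_0$. Comparing the Lebesgue measure of $A \cap B(x,r)$ and of $B(x,r)$ under $\varphi$, both are multiplied by a factor tending to $|\det D\varphi(x_0)|$ as $r \to 0$, so the ratio has the same limit. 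Hence $x$ is a density point of $A$ if and only if $\varphi(x)$ is a density point of $\varphi(A)$, and it is enough to prove that almost every point of a measurable set $A \subset \R^n$ is a density point of $A$.

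Second, on $\R^n$ I would introduce the Hardy--Littlewood maximal function of a locally integrable $f$,
$$ Mf(x) = \sup_{r>0} \frac{1}{\mathrm{Leb}\big(B(x,r)\big)} \int_{B(x,r)} |f(y)| \, dy, $$
and establish the weak-type $(1,1)$ inequality $\mathrm{Leb}\big(\{ Mf > \alpha \}\big) \leq \tfrac{C_n}{\alpha} \| f \|_{L^1}$ by means of the Vitali covering lemma. Standard approximation of $f \in L^1$ by continuous functions then yields the Lebesgue differentiation theorem: for almost every $x$,
$$ \lim_{r\rightarrow 0} \frac{1}{\mathrm{Leb}\big(B(x,r)\big)} \int_{B(x,r)} f(y)\,dy = f(x). $$
Applying this to $f = \mathbf{1}_A$ gives, for almost every $x$,
$$ \lim_{r\rightarrow 0} \frac{\mathrm{Leb}\big(A \cap B(x,r)\big)}{\mathrm{Leb}\big(B(x,r)\big)} = \mathbf{1}_A(x), $$
so that for almost every $x \in A$ this limit equals $1$, i.e. almost every point of $A$ is a density point of $A$. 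Combined with the chart invariance of the first paragraph, this proves the lemma on any smooth manifold.

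The main obstacle is not the Euclidean differentiation theorem, which is classical, but the chart reduction: one must verify that passing through $\varphi$ does not distort densities, and this requires the uniform eccentricity control on the ellipsoids $\varphi(B(x,r))$ together with the fact that the density limit is insensitive to replacing balls by comparable convex neighbourhoods of bounded eccentricity. This robustness of the density notion under mild deformations of the shrinking family is the technical heart of the argument; everything else reduces to quoting the Vitali covering lemma and the Lebesgue differentiation theorem.
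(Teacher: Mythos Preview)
Your proof is correct and follows the standard route to the Lebesgue density theorem on manifolds. The paper, however, does not prove this lemma at all: it is merely stated as a recalled fact (``We recall the following lemma'') and used as a black box in the proof of Theorem~\ref{maintheorem}. So there is nothing to compare; you have supplied a proof where the paper simply quotes the result.
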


\noindent We now move on to give a criterion for a point in $\MT$ to be a density point of the set of exceptional direction. For a dilation torus $T$ we denote by $E_T \subset S^1$ the set of minimal directions on $T$. 

\begin{lemma}
Let $T$ be a dilation torus whose vertical foliation is minimal (meaning that $\frac{\pi}{2} \in E_T$). Suppose that $\Theta(g_t \cdot T)$ does not converge to $0$ as $t$ tends to $+\infty$. Then $\frac{\pi}{2}$ is not a density point in $E_T$. 
\end{lemma}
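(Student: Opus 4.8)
The plan is to linearise the action of the Teichm\"uller flow on the circle of directions, and then to read off the non-minimal directions near the vertical as a scale-invariant family of ``gaps'' that accumulate at $\tfrac{\pi}{2}$ fast enough to prevent it from being a density point of $E_T$. First I would coordinatise directions $\phi$ by $u=\cot\phi$, so that the vertical direction $\phi=\tfrac{\pi}{2}$ corresponds to $u=0$. A direct computation with $g_t=\begin{pmatrix}e^{t}&0\\0&e^{-t}\end{pmatrix}$ shows that $g_t$ sends a direction of slope $\tan\phi$ to one of slope $e^{-2t}\tan\phi$, hence acts on the coordinate $u$ by the \emph{linear} map $u\mapsto e^{2t}u$, fixing $u=0$. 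Since $g_t$ carries leaves to leaves, a direction is minimal (resp. carries a closed leaf) for $T$ if and only if its image is so for $g_t\cdot T$; in particular $E_{g_t\cdot T}=g_t\cdot E_T$, and the interval of closed-leaf directions of a cylinder is transported by the same linear map.

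Next I would use the hypothesis to produce $\theta_0>0$ and $t_n\to+\infty$ with $\Theta(g_{t_n}\cdot T)\geq\theta_0$, and let $D_n$ be a cylinder of angle $\geq\theta_0$ in $g_{t_n}\cdot T$. Its directions carrying a closed leaf form an interval $I_n=(c_n,d_n)$ (in the $u$-coordinate) of angular width equal to the angle of $D_n$, hence $\geq\theta_0$, and not containing $u=0$ because the vertical stays minimal along the orbit; I treat the case $0<c_n<d_n$, the other sign being symmetric. From ``angle $\geq\theta_0$'' I would extract two facts. On one hand $\operatorname{arccot}(c_n)\geq\operatorname{arccot}(c_n)-\operatorname{arccot}(d_n)\geq\theta_0$, so the near-vertical endpoint is bounded: $c_n\leq\cot\theta_0$. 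On the other hand, an elementary optimisation of $\alpha\mapsto \tan\alpha/\tan(\alpha-\theta_0)$ (equivalently, maximising $\operatorname{arccot}(c_nx)-\operatorname{arccot}(d_nx)$ over $x>0$, with maximum $2\arctan\sqrt{d_n/c_n}-\tfrac{\pi}{2}$ attained at $x^2=1/(c_nd_n)$) shows that a cylinder of angle $\geq\theta_0$ necessarily satisfies a bounded-geometry estimate $d_n/c_n\geq\tau_0^2$, where $\tau_0=\tan\!\big(\tfrac{\pi}{4}+\tfrac{\theta_0}{2}\big)>1$.

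Then I would pull everything back to $T$. The cylinder $C_n:=g_{t_n}^{-1}\cdot D_n$ has closed-leaf interval $(a_n,b_n)=e^{-2t_n}(c_n,d_n)$, and every direction in it is non-minimal, so $(a_n,b_n)\subset E_T^c$. The two facts give simultaneously
\[
a_n\leq e^{-2t_n}\cot\theta_0\xrightarrow[n\to\infty]{}0,\qquad \frac{b_n}{a_n}=\frac{d_n}{c_n}\geq\tau_0^2 .
\]
Choosing the radii $r_n=\tau_0\,a_n\to 0$, one has $(a_n,\tau_0 a_n)\subset(a_n,b_n)\subset E_T^c$, whence
\[
\frac{\mathrm{Leb}\big(E_T^c\cap(-r_n,r_n)\big)}{\mathrm{Leb}\big((-r_n,r_n)\big)}\geq\frac{(\tau_0-1)a_n}{2\tau_0 a_n}=\frac{\tau_0-1}{2\tau_0}>0 .
\]
Thus the density of $E_T$ in $(-r_n,r_n)$ stays bounded away from $1$ along $r_n\to0$, so $u=0$, i.e. $\tfrac{\pi}{2}$, is not a density point of $E_T$.

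The main obstacle, I expect, is not the density bookkeeping but the two geometric inputs feeding it: first, making rigorous that the $\mathrm{SL}(2,\mathbb{R})$-action is exactly the linear dilation $u\mapsto e^{2t}u$ on the direction line and that it intertwines minimality and the closed-leaf/cylinder structure (this rests on the chart-level description of the action and on the fact that the directions subtended by a dilation cylinder form an interval whose length is precisely its angle); and second, the two trigonometric estimates of the second paragraph, which must be combined carefully with the sign/endpoint bookkeeping to guarantee that it is genuinely the \emph{near-vertical} endpoint $a_n$ that tends to $0$ while the ratio $b_n/a_n$ stays bounded below. Once these are in place, everything else is routine.
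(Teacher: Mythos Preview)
Your proof is correct and follows the same strategy as the paper's: both exhibit cylinders of definite angle in $g_{t_n}\cdot T$, pull the corresponding arcs of closed-leaf directions back to $T$, and observe that these give gaps in $E_T$ accumulating at the vertical with a fixed \emph{relative} size, which precludes $\tfrac{\pi}{2}$ from being a density point. The only difference is in the bookkeeping: you linearise the projective action via $u=\cot\phi$ and extract the explicit bounds $c_n\le\cot\theta_0$ and $d_n/c_n\ge\tan^2\bigl(\tfrac{\pi}{4}+\tfrac{\theta_0}{2}\bigr)$, whereas the paper stays in the angle coordinate and appeals to convexity of the projective map $g_t$ on $[0,\tfrac{\pi}{2}]$ to compare the ratio $(\theta_2-\theta_1)/(\tfrac{\pi}{2}-\theta_1)$ before and after pullback. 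Your version is arguably more transparent.

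Two cosmetic points worth tidying. First, your case split ``$0<c_n<d_n$ or the symmetric sign'' silently excludes the possibility that the cylinder's arc of directions straddles the horizontal, in which case its image in the $u$-line is the complement of a bounded interval rather than a bounded interval; passing to a sub-arc of angular width $\theta_0/2$ lying on one side of the horizontal fixes this immediately, with $\theta_0$ replaced by $\theta_0/2$ in your two estimates. Second, you compute the density ratio in the $u$-coordinate rather than in arclength on $S^1$; since $\phi\mapsto\cot\phi$ is a smooth diffeomorphism with nonzero derivative at $\tfrac{\pi}{2}$, density points coincide in the two coordinates, so this is legitimate but should be stated.
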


\begin{proof}
First notice that $E_T$ is invariant by the rotation of angle $\pi$ (a foliation which is minimal induces two exceptional directions: the ones corresponding to the two possible orientations). Hence a point $p\in E_T$ is a density point if and only if $-p = p + \pi$ is a density point. More generally, in this particular case of tori with two singular points directions have same dynamical behaviour as their opposite and we might as well think of $E_T$ as a subset of $\mathbb{RP}^1$.

\vspace{3mm} \noindent The action of $g_t$ on $\mathbb{RP}^1$ is of North-South type: the vertical direction is a repulsive fixed point and the horizontal one is an attracting one. The idea behind this proof is that the Teichmüller flow $g_t$ allows us as we are making $t$ larger and larger to zoom in close to the vertical direction. Directions in $\mathbb{RP}^1$ that participate to a cylinder decomposition lie in the complement of $E_T$.

\vspace{3mm} \noindent By hypothesis there exists a sequence $t_n \rightarrow +\infty$ such that $g_{t_n} \cdot T$ has a cylinder of angle greater than a fixed $\epsilon> 0$. Without loss of generality (up to making $\epsilon$ a bit smaller) we can assume that the directions having a closed leave in this cylinder are direction between $\theta_1(n)$ and $\theta_2(n)$ with $0 <\theta_1(n) < \theta_2(n) \leq \frac{\pi}{2} - \epsilon$. Notice that the projective action of $g_t$ on $[0, \frac{\pi}{2}]$ is a convex map which fixes $0$ and $\frac{\pi}{2}$. It implies that the ratio $\frac{\theta_2(n) - \theta_1(n)}{\frac{\pi}{2} - \theta_1(n)}$ is less that $\frac{g_{-t_n}(\theta_2(n)) - g_{-t_n}(\theta_1(n))}{\frac{\pi}{2} -  g_{-t_n}(\theta_1(n))}$.  But the former is less than $\frac{2\epsilon}{\pi}$ and the latter is less than the proportion of Morse-Smale direction in a ball of radius less than $g_{-t_n}(\frac{\pi}{2}- \epsilon)$. Picking $t_n$ large enough this latter quantity can be made arbitrarily small. Since the proportion of Morse-Smale directions in balls centred at $\frac{\pi}{2}$ does not tend to zero, $\frac{\pi}{2}$ is not a density point in $E_T$. This proves the Lemma.
\end{proof}

\begin{lemma}
\label{measurezero}
The set $D \subset E$ of dilation tori $T$ whose vertical direction is a density point in $E_T$ has measure zero. 
\end{lemma}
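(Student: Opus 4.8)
The plan is to reduce the statement to the finiteness of the cusp volume (Theorem \ref{finitevolume}) together with the density-point criterion proved just above. First I would record the inclusion obtained by contraposing the previous lemma, namely
$$ D \subseteq A := \big\{\, T \in \MT \ \big| \ \lim_{t \to +\infty} \Theta(g_t \cdot T) = 0 \,\big\}. $$
Indeed, if $T \in D$ then the vertical foliation of $T$ is minimal and $\frac{\pi}{2}$ is a density point of $E_T$; the contrapositive of the preceding lemma then forces $\Theta(g_t \cdot T) \to 0$. So it is enough to show $\mu(A) = 0$, and I would carry out the whole argument at the level of the $g_t$-invariant measure $\mu$ (which lies in the Lebesgue class, so ``measure zero'' is unambiguous).

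The geometric input I would use is a shrinking family of cusps. For $\epsilon \in \, ]0, \tfrac{\pi}{4}]$ set $\mathcal{C}_\epsilon = \{\, T \in \MT \mid \Theta(T) \le \epsilon \,\}$, so that $\mathcal{C}_{\pi/4}$ is the cusp $\mathcal{C}$ of Theorem \ref{finitevolume} and $\mu(\mathcal{C}_\epsilon) \le \mu(\mathcal{C}) < +\infty$. Since every dilation torus in $\MT$ admits a cylinder decomposition, it carries a cylinder of strictly positive angle, whence $\Theta > 0$ everywhere and $\bigcap_{\epsilon > 0} \mathcal{C}_\epsilon = \emptyset$. Continuity from above of the finite measure $\mu$ restricted to $\mathcal{C}$ then gives $\mu(\mathcal{C}_\epsilon) \to 0$ as $\epsilon \to 0$.

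The core is a Poincar\'e-recurrence-type argument exploiting that $g_t$ preserves $\mu$. Fix $\epsilon \in \, ]0, \tfrac{\pi}{4}]$. If $\Theta(g_t \cdot T) \to 0$ then the forward orbit of $T$ eventually enters $\mathcal{C}_\epsilon$ and never leaves it, so $A \subseteq A^\epsilon := \bigcup_{N \in \N} A^\epsilon_N$, where $A^\epsilon_N = \{\, T \mid g_t \cdot T \in \mathcal{C}_\epsilon \text{ for all } t \ge N \,\}$; these sets are measurable, being intersections of the closed sets $\{\, \Theta(g_t \cdot \,) \le \epsilon \,\}$, and they increase with $N$. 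The key point is that $g_N$ restricts to a $\mu$-preserving bijection of $A^\epsilon_N$ onto $B^\epsilon := \{\, S \mid g_s \cdot S \in \mathcal{C}_\epsilon \text{ for all } s \ge 0 \,\}$, and $B^\epsilon \subseteq \mathcal{C}_\epsilon$; hence $\mu(A^\epsilon_N) = \mu(B^\epsilon) \le \mu(\mathcal{C}_\epsilon)$ for every $N$. Passing to the increasing union yields $\mu(A) \le \mu(A^\epsilon) = \mu(B^\epsilon) \le \mu(\mathcal{C}_\epsilon)$, and letting $\epsilon \to 0$ forces $\mu(A) = 0$, so that $\mu(D) = 0$.

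The hard part will not be the soft measure theory but checking that each step is legitimate: completeness of the Teichm\"uller flow on $\MT$ (so that $g_N$ is a genuine $\mu$-preserving bijection and the translated sets are well defined), measurability of the level and limit sets of $\Theta \circ g_t$, and the continuity-from-above claim $\mu(\mathcal{C}_\epsilon) \to 0$. This last point is where Theorem \ref{finitevolume} enters, combined with the elementary but essential observation that $\Theta$ is strictly positive on all of $\MT$, so that the cusps $\mathcal{C}_\epsilon$ genuinely shrink to the empty set. Once the density-point lemma supplies the inclusion $D \subseteq A$, everything else follows from invariance of $\mu$ under $g_t$.
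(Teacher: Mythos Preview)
Your proof is correct and follows essentially the same approach as the paper: reduce to showing that $A = \{\, T \mid \Theta(g_t \cdot T) \to 0 \,\}$ has measure zero, then use that the flow pushes $A$ into cusps of arbitrarily small (finite) measure together with $g_t$-invariance of $\mu$. The paper argues by contradiction with a single threshold $\theta_A$, whereas you give the direct version via continuity from above of $\mu$ on the shrinking family $\mathcal{C}_\epsilon$ (making explicit that $\Theta > 0$ everywhere); these are cosmetic differences only.
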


\begin{proof}

We simply prove that the set of tori such that $\Theta(g_t \cdot T)$ tends to zero as $t$ tends to infinity has measure zero. This set containing $D$, it will imply the Lemma.
\noindent We assume by contradiction that such a set has positive measure $A>0$. By Theorem \ref{finitevolume}, the set of tori $T$ such that $\Theta(T) \leq \frac{\pi}{2}$ has finite volume. In turn we have that there exists $\theta_A$ such that 
$$ F = \big\{ T \in \MT \ \big| \ \Theta(T) \leq \theta_A  \big\}$$ has measure less that $\frac{A}{3}$. Also there exists $t_0$ such that the set   

$$ D' = \big\{ T \in D \ \big| \ \forall t \geq t_0 \ \Theta(T) < \theta_A  \big\}$$ has measure more than $\frac{2A}{3}$. But by definition $g_t(D') \subset F$ which implies $\mu(g_t(D')) \leq \mu(F)$. The action of $g_t$ being measure-preserving we get that $\mu(D') \leq \mu(F)$ which is a contradiction.
\end{proof}

\subsection{Proof of the main theorem}

\noindent We prove here Theorem \ref{maintheorem}.

\begin{proof}[Proof of Theorem \ref{maintheorem}]

Assume by contradiction that $\mu(E) > 0$ is positive. We introduce $\mathcal{M}(\lambda)$ the moduli space of dilation with no marked direction (we identify in $\MT$ those elements which are image of one another by a rotation in $\mathrm{SO}(2)$). There is a natural projection 

$$ \pi : \MT \longrightarrow  \mathcal{M}(\lambda)$$ which is a fibration whose fiber are circle. For each $T$ in $\MT$, $\pi^{-1}(\pi(T))\simeq S^1$ naturally identifies to the set of directions on $T$. In turn it makes sense for any $T \in \mathcal{M}(\lambda)$ to talk of $E_T$ its set of exceptional directions. This projection being smooth and the measure $\mu$ in the same class as the Lebesgue measure, Fubini theorem ensures that the set of $T \in \mathcal{M}(\lambda)$ such that $E_T$ has positive measure has positive measure itself. By Lebesgue regularity lemma, for each of these $T$ the set of density points in $E_T$ has positive measure which in turn implies that the set of elements $T \in \MT$ whose vertical foliation is a density point in $E_T$ has positive measure. This contradicts Lemma \ref{measurezero} and thus terminates the proof of Theorem \ref{maintheorem}.

\end{proof}

\section{Further comments/open questions}
\label{further}
\noindent We make a few comments and single out questions that seem to be worth further investigation. 
\vspace{3mm}
\paragraph*{\bf The analogy with hyperbolic manifolds of infinite volume}

We believe that there is a interesting analogy to be drawn between moduli spaces of dilation surfaces and hyperbolic manifolds of infinite volume. The table below gives pairing between features of both worlds that support this analogy.  

\begin{center}
\begin{tabular}{| l | l |}
\hline
\textbf{Moduli space of dilation tori} & \textbf{Hyperbolic manifold of infinite volume} \\
\hline
Teichmüller flow & Geodesic flow \\ 
\hline
Mapping class group $\mathrm{MCG}(T*)$ & $\Gamma$ an infinite covolume Kleinian group \\
\hline 
$\{ T \ | \ \theta(T) \geq \frac{\pi}{2}\}$ & Funnel (infinite volume)\\
\hline
$\{ T \ | \ \theta(T) \leq \frac{\pi}{4}\}$ & Cusp (finite volume)\\
\hline
Measure $\mu$ & Liouville measure \\
\hline
$\{ \text{tori with minimal vertical foliation}\}$ & Limit set of $\Gamma$\\
\hline
\end{tabular}
\end{center}

\vspace{3mm}

\paragraph{\bf General case}
We do hope that Theorem \ref{maintheorem} generalises to higher dimensional moduli spaces. We formulate the following two-legged conjecture

\begin{conjecture}
\label{conj}
\begin{itemize}
\item Fix $g$ and $n$ such that $\mathcal{MD}_{g,n}$ is non-empty. The vertical foliation of almost every dilation surface in $\mathcal{MD}_{g,n}$ is Morse-Smale.

\item Fix $k\geq 3$. Almost every $k$-affine interval exchange transformation is Morse-Smale. In particular, almost every piecewise affine circle homeomorphism is Morse-Smale.
\end{itemize}
\end{conjecture}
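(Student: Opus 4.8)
The plan is to deduce Theorem~\ref{maintheorem} from the two preceding lemmas by a measure-theoretic sandwich, so that all of the genuine dynamical content is already packaged in Lemma~\ref{measurezero}. By Proposition~\ref{classification} the two formulations of the theorem are equivalent, so it suffices to prove $\mu(E)=0$, and I would argue by contradiction: suppose $\mu(E)>0$. The structural object I would use is the circle fibration $\pi:\MT\longrightarrow\mathcal{M}(\lambda)$ forgetting the marked vertical direction, whose fibre over a torus $\bar T$ is canonically the circle of directions on $\bar T$ and carries the subset $E_{\bar T}$ of minimal directions. Since $\pi$ is smooth and $\mu$ lies in the Lebesgue class, Fubini's theorem applied along the fibres converts $\mu(E)>0$ into the assertion that a positive-measure set of base tori $\bar T$ have $E_{\bar T}$ of positive one-dimensional measure.

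I would then run the Lebesgue regularity lemma fibrewise and integrate back up. For each such $\bar T$ almost every point of $E_{\bar T}$ is a density point, so the density points of $E_{\bar T}$ form a positive-measure subset of the fibre circle. The point that must not be glossed over is that $\MT$ already records the marked direction as part of its data, so the set $D\subset\MT$ of tori whose marked vertical direction is a density point of $E_{\bar T}$ meets each fibre in \emph{exactly} this density-point set, not in a single direction. A second application of Fubini --- integrating the positive fibre-measures over the positive-measure base --- then gives $\mu(D)>0$, directly contradicting Lemma~\ref{measurezero} and completing the proof.

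Granting the two lemmas, the argument above is routine; the real obstacle is pushed entirely into Lemma~\ref{measurezero}, which is where I expect the difficulty to live. Its proof needs two independent ingredients. The first is the density-point criterion of the unnamed lemma preceding it: using that $g_t$ acts on $\mathbb{RP}^1$ with North--South dynamics and a convexity/zoom-in estimate near the repelling vertical fixed point, one shows that a vertical density point of $E_T$ forces $\Theta(g_t\cdot T)\to 0$, so that $D$ is contained in $\{T:\Theta(g_t\cdot T)\to 0\}$. The second is the finiteness of the volume of the cusp $\{\Theta\le\frac{\pi}{4}\}$ from Theorem~\ref{finitevolume}, which together with the $g_t$-invariance of $\mu$ makes $\{T:\Theta(g_t\cdot T)\to 0\}$ null: a positive-measure piece of it is eventually mapped by the measure-preserving flow into a sublevel set $\{\Theta\le\theta_A\}$ whose volume can be taken arbitrarily small, forcing its own measure below that of the sublevel set --- a contradiction. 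The single hardest step underpinning all of this is the finite-volume computation of Theorem~\ref{finitevolume}, which in turn rests on the cylinder-modulus estimates of Section~\ref{geometry}.
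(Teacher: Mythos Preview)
The statement you were asked to prove is the paper's Conjecture~\ref{conj}, not Theorem~\ref{maintheorem}. The paper does \emph{not} prove this statement: it is explicitly formulated in Section~\ref{further} as an open conjecture about arbitrary $g,n$ and arbitrary $k\geq 3$, and the surrounding text (``We do hope that Theorem~\ref{maintheorem} generalises\ldots'', ``We formulate the following two-legged conjecture'') makes clear that no proof is offered. So there is no ``paper's own proof'' to compare against.

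Your proposal, meanwhile, is a proof of Theorem~\ref{maintheorem} --- the special case $g=1$, $n=2$ --- and as such it matches the paper's argument for \emph{that} theorem essentially line for line (Fubini along the $S^1$-fibration $\pi:\MT\to\mathcal{M}(\lambda)$, Lebesgue density fibrewise, contradiction with Lemma~\ref{measurezero}). But it does not touch the actual content of the Conjecture. Every ingredient you invoke --- Proposition~\ref{classification}, Lemma~\ref{measurezero}, Theorem~\ref{finitevolume}, the cylinder-modulus estimates of Section~\ref{geometry}, even the construction of the $\SL$-invariant measure $\mu$ --- is established in the paper only for dilation tori with two singularities. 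In particular, the ``low-dimensional miracle'' that the isoholonomic foliation coincides with the $\SL$-orbit foliation (which is what produces the invariant measure $\mu$) fails in higher genus, and the paper itself flags the existence of foliations of exceptional (Cantor) type as an additional obstruction in the general case. Your sketch contains no mechanism to handle either issue, so as a proof of Conjecture~\ref{conj} it has a genuine gap: it simply does not address the general statement.
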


\noindent A feature of the general case that is to complicate matters is that contrary to the torus case, there are foliations which are neither minimal nor Morse-Smale but of 'exceptional' type, meaning that they have closed invariant sets which are transversally Cantor sets (we refer to \cite{Levitt2}, \cite{BressaudHubertMaass} and \cite{BFG} for the existence of such foliations). One would need to understand how these fit in the geometrical picture to be drawn.

\vspace{3mm}

\paragraph{\bf Invariant measures} One of the key ingredients of the proof of Theorem \ref{maintheorem} is the existence of a measure that satisfies the three following criteria

\begin{enumerate}
\item it is invariant under the action of the Teichmüller flow;
\item it is equivalent to the Lebesgue measure;
\item it relates in a sensible way to the geometry of the underlying dilation surfaces (giving finite mass to subset on which the function $\Theta$ is bounded for instance). 
\end{enumerate}

\noindent The existence of such a measure is in our case a sort of 'low dimensional' miracle. This leads us to ask the following question

\begin{question}
Do strata of moduli spaces of dilation surfaces always have a measure satisfying the three condition above?
\end{question}

\noindent Another range of questions concerns invariant probability measures. In particular it would be interesting to know if there exists an invariant probability measure(for the Teichmüller flow) of maximal complexity \textit{i.e.} whose support contains all minimal points in $\MT$. The same question would also be very interesting in the general setting but seems a bit fanciful as long as the basic topological dynamics of Teichmüller flow are  not understood. 
\begin{question}
Do Patterson-Sullivan-Bowen-Margulis type of measure exist for the action of the Teichmüller flow on $\MT$? What would their dynamical properties be?
\end{question}

\vspace{2mm}

\paragraph{\bf Particular surfaces} A given surface gives rise to  an interesting one-parameter family of foliations.

\begin{question} 
Is it true that for each dilation surface which is not a translation surface the set of Morse-Smale directions in $S^1$ has full measure?
\end{question}

\noindent The fact that dilation tori with two singularities have cylinders is a relatively easy consequence of the existence of convex polygonal models. It does not seem obvious how to generalise this statement to higher genus.

\begin{question} 
Does every dilation surface which is not a translation surface have a dilation cylinder?
\end{question}

\paragraph{\bf Hausdorff dimension}

We have proved that the set of minimal direction has measure zero. What about its Hausdorff dimension? It is easy to prove that every dilation torus has at least one minimal direction and this implies that the Hausdorff dimension of $E$ is at least $\mathrm{dim}(\MT) - 1 = 4$

\begin{question}
Is the Hausdorff dimension of $E$ less than $\mathrm{dim}(\MT) = 5$? Is the Hausdorff dimension of $E$ more than $\mathrm{dim}(\MT) - 1= 4$? 
\end{question}

\noindent Again this question would be interesting in a more general setting but it seems a bit nonsensical to venture to ask questions about the Hausdorff dimension as long as Conjecture \ref{conj} remains open.

\bibliographystyle{alpha} 
\bibliography{biblio}

\end{document}